\newcommand\Ex{{\mathbb E}}
\newcommand\chitilde{\tilde{\chi}}
\newcommand\Normal{{\mathcal N}}
\newcommand\cF{{\mathcal F}}
\newcommand\N{{\mathbb N}}
\newcommand\R{{\mathbb R}}
\newcommand\C{{\mathbb C}}
\newcommand\dto{\overset{d}{\to }}
\newcommand\one{{\bf 1}}
\newcommand\cc{{c}}
\newcommand\bra[1]{\langle #1 \rangle}
\DeclareMathOperator{\tr}{tr}
\DeclareMathOperator{\Var}{Var}
\DeclareMathOperator{\Hess}{Hess}
\DeclareMathOperator{\diag}{diag}
\newtheorem{theorem}{Theorem}[section]
\newtheorem{corollary}[theorem]{Corollary}
\newtheorem{lemma}[theorem]{Lemma}
\theoremstyle{definition}
\theoremstyle{remark}
\newtheorem{remark}[theorem]{Remark}
\title{Beta Laguerre ensembles in global regime}
\author{Hoang Dung Trinh\footnote{Faculty of Mathematics Mechanics Informatics, VNU University of Science. Email: thdung.hus@gmail.com} \and Khanh Duy Trinh \footnote{Global Center for Science and Engineering, Waseda University. 
Email: trinh@aoni.waseda.jp}}
\begin{document}
\maketitle

\begin{abstract}

Beta Laguerre ensembles, generalizations of Wishart and Laguerre ensembles, can be realized as eigenvalues of certain random tridiagonal matrices. Analogous to the Wishart ($\beta=1$) case and the Laguerre $(\beta = 2)$ case, for fixed $\beta$, it is known that the empirical distribution of the eigenvalues of the ensembles converges weakly to Marchenko--Pastur distributions, almost surely. The paper restudies the limiting behavior of the empirical distribution but in regimes where the parameter $\beta$ is allowed to vary as a function of the matrix size $N$. We show that the above Marchenko--Pastur law holds as long as $\beta N \to \infty$. When $\beta N \to 2c \in (0, \infty)$, the limiting measure is related to associated Laguerre orthogonal polynomials. Gaussian fluctuations around the limit are also studied.  

\medskip

	\noindent{\bf Keywords:} Beta Laguerre ensembles, Marchenko--Pastur distributions, associated Laguerre orthogonal polynomials, Poincar\'e inequality
		
\medskip
	
	\noindent{\bf AMS Subject Classification: } Primary 60B20; Secondary 60F05
\end{abstract}

\section{Introduction}
Beta Laguerre ($\beta$-Laguerre) ensembles are ensembles of $N$ positive particles distributed according  the following joint probability density function
\begin{equation}\label{bLE}
	\frac{1}{Z_{N, M}^{(\beta)}}\prod_{i < j}|\lambda_j - \lambda_i|^\beta \prod_{i = 1}^N \left( \lambda_i^{\frac{\beta}{2}(M-N+1) - 1} e^{-\frac{\beta M}{2}  \lambda_i} \right), \quad (\lambda_i > 0),
\end{equation}
where $\beta > 0$ and $M > N -1$ are parameters, and $Z_{N, M}^{(\beta)}$ is the normalizing constant. For two special values $\beta = 1,2$, they are the joint density of the eigenvalues of Wishart matrices and Laguerre matrices, respectively. For general $\beta > 0$, the ensembles can be realized as eigenvalues of a random tridiagonal matrix model $J_N = B_N (B_N)^t$ with a bidiagonal matrix $B_N$ consisting of independent entries distributed as
\[
	B_N = \frac{1}{\sqrt{\beta M}} \begin{pmatrix}
		\chi_{\beta M}	\\
		\chi_{\beta (N-1)}	&\chi_{\beta(M - 1)}	\\
		&\ddots	&\ddots\\
		&&\chi_\beta	&\chi_{\beta(M - N + 1)}
	\end{pmatrix},
\]
where $\chi_k$ denotes the chi distribution with $k$ degrees of freedom, and $(B_N)^t$ denotes the transpose of $B_N$.

Wishart matrices (resp.~Laguerre matrices) are  random matrices of the form $M^{-1}G^{t}G$ (resp.~$M^{-1}G^{*}G$), where $G$ is an $M \times N$ matrix consisting of i.i.d.~(independent identically distributed) entries of standard real (resp.~complex) Gaussian distribution.  Here $G^*$  denotes the  Hermitian conjugate of the matrix $G$. These random matrix models can be defined in a different way as invariant probability measures on the set of symmetric (resp.~Hermitian) matrices. The limiting behavior of their eigenvalues has been studied intensively, and it is known that as $N \to \infty$ with $N/M \to \gamma \in (0,1)$, the empirical distribution 
\[
	L_N = \frac{1}{N}\sum_{i = 1}^N \delta_{\lambda_i}
\]  
converges to the Marchenko--Pastur distribution with parameter $\gamma$, almost surely. Here $\delta_\lambda$ denotes the Dirac measure at $\lambda$. The convergence means that for any bounded continuous function $f$, 
\[
	\int f(x) dL_N(x) = \frac{1}{N} \sum_{i = 1}^N f(\lambda_i) \to \int f(x) mp_\gamma(x) dx\quad \text{as}\quad N \to \infty, \text{almost surely,}
\]
where $mp_\gamma(x)$ is the density of the Marchenko--Pastur distribution with parameter $\gamma$,
\[
	mp_\gamma(x) = \frac{1}{2\pi \gamma x} \sqrt{(\lambda_+ - x)(x - \lambda_-)}, \quad (\lambda_- < x < \lambda_+), \quad \lambda_{\pm} = (1 \pm \sqrt \gamma)^2.
\]
Gaussian fluctuations around the limit were also established with explicit formula for the limiting variance. The convergence to a limit and fluctuations around the limit of the empirical distributions are two typical problems in the global regime when a random matrix model is considered. Refer the book \cite{Pastur-book} for more details on Wishart and Laguerre matrices.

The convergence to Marchenko--Pastur distributions and fluctuations around the limit were extended to $\beta$-Laguerre ensembles for general $\beta> 0$ in \cite{DE06} by using the random tridiagonal matrix model. Based on the model as well, results in the local regime and the edge regime were established \cite{Jacquot-Valko-2011, Sosoe-Wong-2014}. Note that the parameter $\beta$ is fixed in those studies.

The aim of this paper is to refine results in the global regime of $\beta$-Laguerre ensembles. We assume that the parameter $\beta$ varies as a function of $N$, while the parameter $M$ depends on $N$ in a way that $N/M \to \gamma \in (0,1)$ as usual. We show that the Marchenko--Pastur law holds as long as $\beta N \to \infty$. When $\beta N $ stays bounded, the limiting measure is related to associated Laguerre orthogonal polynomials. For the proof, we extend some ideas that were used in \cite{DS15, Nakano-Trinh-2018, Trinh-2017} in case of Gaussian beta ensembles. Our main results are stated in the following two theorems.

\begin{theorem}[Convergence to a limit distribution]\label{thm:LLN}
\begin{itemize}
	\item[\rm(i)] As $\beta N \to \infty$, the empirical distribution $L_N$ converges weakly to the Marchenko--Pastur distribution with parameter $\gamma$, almost surely. Here $L_N = N^{-1} \sum_{i = 1}^N \delta_{\lambda_i}$ is the empirical distribution of the $\beta$-Laguerre ensembles~\eqref{bLE}. 
	
	\item[\rm(ii)] As $\beta N \to 2c \in (0,\infty)$, the sequence $\{L_N\}$ converges weakly to the probability measure $\nu_{\gamma, c}$, almost surely, where $\nu_{\gamma, c}$ is given in Theorem~{\rm\ref{thm:2c-unscale}}.
\end{itemize}
\end{theorem}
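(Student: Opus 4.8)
The plan is to use the tridiagonal matrix model and the method of moments. Since $J_N = B_N(B_N)^t$ is a real symmetric tridiagonal matrix, its $k$-th moment is
\[
	\int x^k \, dL_N(x) = \frac1N \tr(J_N^k),
\]
which expands as a sum over closed nearest-neighbour paths of length $k$ on $\{1,\dots,N\}$, each weighted by a product of the squared entries $u_i = \chi_{\beta(M-i+1)}^2/(\beta M)$ and $v_i = \chi_{\beta(N-i)}^2/(\beta M)$ of $B_N$; these are independent, and each is a Gamma variable with common scale $2/(\beta M)$. Both candidate limits are compactly supported, hence determined by their moments, so (given a uniform second-moment bound for tightness) it suffices to show that for every fixed $k$ the quantity $\frac1N\tr(J_N^k)$ converges almost surely to $\int x^k \, d\mu$, with $\mu = mp_\gamma$ in case~(i) and $\mu = \nu_{\gamma, c}$ in case~(ii). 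This splits the proof into (a) convergence of the expected moments $\Ex[\frac1N\tr(J_N^k)]$ and (b) a concentration estimate.

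For (a) in case~(i), note that $\beta N \to \infty$ is equivalent to $\beta M \to \infty$ since $N/M \to \gamma$, and then each $u_i$, $v_i$ concentrates around its mean $(M-i+1)/M$, resp.\ $(N-i)/M$, with vanishing relative fluctuations; more precisely $\Ex[(\chi_{\beta\ell}^2/(\beta M))^m] = (\ell/M)^m\bigl(1 + O(m^2/(\beta\ell))\bigr)$. Substituting the means into the path sum yields exactly the path sum for the deterministic tridiagonal matrix $\bar J_N$ obtained by replacing every $\chi_{\beta\ell}^2/(\beta M)$ by $\ell/M$; this is the $\beta = \infty$ (deterministic) Laguerre model, whose empirical spectral distribution is the classical tridiagonal realisation of $mp_\gamma$. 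The error terms contribute $O(1/(\beta N))$ and vanish, and the finitely many boundary indices $i = O(1)$ contribute $O(1/N)$ to the normalised trace, so $\Ex[\frac1N\tr(J_N^k)] \to \int x^k \, mp_\gamma(x)\,dx$.

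For (a) in case~(ii) the entries no longer concentrate: for indices with $i/N \to x \in (0,1)$ one has $\chi_{\beta(N-i)}^2 \to \Gam(c(1-x), 2)$ and $\chi_{\beta(M-i+1)}^2 \to \Gam(c(\gamma^{-1}-x), 2)$ in distribution, so, localised around a macroscopic position $x$, the matrix $J_N$ converges to a semi-infinite random Jacobi matrix with $x$-dependent Gamma coefficients. The expected path sums then converge to an integral over $x \in (0,1)$ of the moments of these limiting random Jacobi matrices, and carrying out this computation via the three-term recurrence and the associated Laguerre orthogonal polynomials identifies the limit with $\int x^k \, d\nu_{\gamma, c}(x)$, where $\nu_{\gamma, c}$ is the measure produced in Theorem~\ref{thm:2c-unscale}. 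I expect this to be the main obstacle: it requires pinning down the exact limiting recurrence coefficients and then the orthogonal-polynomial bookkeeping that turns the $x$-integral of spectral measures into the stated $\nu_{\gamma, c}$.

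For (b), I would invoke a Poincar\'e (spectral gap) inequality for the product Gamma measure of the variables $(u_i, v_i)$: writing $g$ for $\frac1N\tr(J_N^k)$,
\[
	\Var(g) \le \frac{2}{\beta M}\sum_i \Bigl( \Ex\bigl[u_i (\partial_{u_i} g)^2\bigr] + \Ex\bigl[v_i (\partial_{v_i} g)^2\bigr] \Bigr).
\]
Each partial derivative is, up to combinatorial factors, an entry of $k J_N^{k-1}$ times ratios $\sqrt{v_{i-1}/u_i}$ and divided by $N$; the offending ratios cancel against the prefactors $u_i, v_i$ inside the expectation, and since $J_N$ has bounded bandwidth and operator norm with all moments finite, each summand is $O_k(1/N^2)$, so $\Var(\frac1N\tr(J_N^k)) = O_k\bigl(1/(\beta N^2)\bigr)$ in case~(i) and $O_k(1/N)$ in case~(ii). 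This already gives convergence in probability. To upgrade to almost-sure convergence I would combine this with the sub-exponential tails of the Gamma entries (after a mild truncation of the unbounded entries) to obtain a deviation bound of the form $\Prob\bigl(|g - \Ex g| > t\bigr) \le \exp(-c_{k,t}\,\beta^2 N^3)$; since $\beta N \to \infty$ in case~(i) and $\beta^2 N^3 \sim 4c^2 N$ in case~(ii), the right-hand side is summable in $N$ in both regimes, and Borel--Cantelli yields the almost-sure statements (alternatively one sums the variance bound along a polynomially sparse subsequence and fills the gaps).
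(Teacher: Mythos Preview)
Your moment-method skeleton is sound, but three points diverge from the paper and one of them is a genuine gap.

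First, a factual slip: $\nu_{\gamma,c}$ is \emph{not} compactly supported; by Lemma~\ref{lem:associated-Laguerre} its density lives on all of $[0,\infty)$. The moment problem is still determinate because the off-diagonal entries $b_n$ of the limiting Jacobi matrix satisfy $\sum 1/b_n = \infty$, so the method of moments remains valid, but you must invoke that criterion rather than compact support.

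Second, and this is where the real gap lies, the paper never integrates over a macroscopic position $x\in(0,1)$. The identity~\eqref{same-mean}, which comes from the Dirichlet distribution of the eigenvector weights in the tridiagonal model, gives
\[
\Ex\Bigl[\tfrac1N\tr(J_N^r)\Bigr] \;=\; \Ex\bigl[(J_N)^r(1,1)\bigr],
\]
so only the top-left corner matters and it involves a \emph{fixed} finite number of entries. In case~(i) those entries converge in $L^q$ to constants and the limit is read off as $(MP_\gamma)^r(1,1)$. In case~(ii) the paper does not localise and then push through orthogonal-polynomial bookkeeping as you propose; it invokes the Dumitriu--Edelman duality
\[
m_r(N,\kappa,\alpha) \;=\; (-1)^r\kappa^r\, m_r(-\kappa N,\kappa^{-1},-\alpha/\kappa),
\]
which trades the $\kappa N\to c$ limit for a $\kappa\to\infty$ limit at frozen first argument, where again the entries become deterministic and the limiting Jacobi matrix $J_{\alpha,c}$ can be written down explicitly. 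Your route would produce $\int_0^1 \Ex\bigl[(\text{random Jacobi at }x)^r(0,0)\bigr]\,dx$, and turning \emph{that} into the spectral measure of a single deterministic Jacobi matrix is precisely the step you label ``the main obstacle''; as written it is a hope, not an argument, and the duality is what actually closes it.

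Third, for the almost-sure upgrade the paper does something more elementary than Poincar\'e plus truncation: since $(J_N)^r(i,i)$ and $(J_N)^r(j,j)$ are independent once $|i-j|$ exceeds a fixed $D_r$, the centred trace splits into $D_r$ sums of independent terms with uniformly bounded fourth moments, and a standard Borel--Cantelli argument finishes. Your variance bound $O(1/N)$ in case~(ii) is not summable, and the claimed deviation exponent $\beta^2 N^3$ does not follow from a weighted Poincar\'e inequality for a polynomial (hence non-Lipschitz) observable without substantial extra work; the block-independence argument sidesteps all of this. It is worth noting that the Poincar\'e inequality you wrote down is essentially the one the paper derives in~\eqref{Poincare-for-Laguerre}, but the paper uses it only for the CLT, not for the law of large numbers.
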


To prove the above results, it is sufficient to show that any moment of $L_N$ converges almost surely to the corresponding moment of $\nu_{\gamma, c}$. Here for convenience, we refer to $\nu_{\gamma, \infty}$ as the Marchenko--Pastur distribution with parameter $\gamma$. It then follows that (see \cite{Trinh-ojm-2018}) for any continuous function $f$ of polynomial growth (there is a polynomial $P(x)$ such that $|f(x)| \le P(x)$ for all $x \in \R$), 
\[
	\int f(x) dL_N(x) = \frac{1}{N} \sum_{i = 1}^N f(\lambda_i) \to \int f(x) d\nu_{\gamma, c}( x)\quad \text{as}\quad N \to \infty, \text{almost surely.}
\]   

\begin{theorem}[Gaussian fluctuations around the limit] \label{thm:CLT-full}
Assume that the function $f$ has continuous derivative of polynomial growth. Then the following hold.
	\begin{itemize}
		\item[\rm(i)]	As $\beta N \to \infty$, 
		\[
			\sqrt{\beta} \left( \sum_{i = 1}^N f(\lambda_i) - \Ex\bigg[ \sum_{i = 1}^N f(\lambda_i) \bigg]  \right) \dto \Normal(0, \sigma_f^2),
		\] 
		where 
\[
	\sigma_f^2 = \frac{1}{2\pi^2} \int_{\gamma_-}^{\gamma^+}\int_{\gamma_-}^{\gamma^+} \left(\frac{f(y) - f(x)}{y - x}\right)^2 \frac{4\gamma - (x - \gamma_m)(y - \gamma_m)}{\sqrt{4 \gamma - (x - \gamma_m)^2} \sqrt{4 \gamma - (y - \gamma_m)^2}}dx dy,
\]
with $\gamma_m = (\gamma_{-} + \gamma_{+})/2 = (1 + \gamma)$. Here `$\dto$' denotes the convergence in distribution.
		\item[\rm(ii)] As $\beta N \to 2c \in (0, \infty)$, 
		\[
			\sqrt{\beta} \left( \sum_{i = 1}^N f(\lambda_i) - \Ex\bigg[ \sum_{i = 1}^N f(\lambda_i) \bigg]  \right) \dto \Normal(0, \sigma_{f,c}^2),
		\]
		where $\sigma_{f,c}^2$ is a constant.
	\end{itemize}
\end{theorem}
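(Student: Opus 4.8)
The plan is to reduce Theorem~\ref{thm:CLT-full} to monomial test functions $f(x)=x^p$ and then to recognize $\sum_{i=1}^N f(\lambda_i)=\tr J_N^p$ as a sum of finitely dependent random variables, to which a central limit theorem applies. Throughout one works with the tridiagonal model $J_N=B_NB_N^t$; write $a_i$ and $b_i$ for the diagonal and subdiagonal entries of $B_N$, which are independent rescaled $\chi$ variables.

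For the reduction I would first establish a Poincar\'e inequality for the joint law of the entries of $B_N$. Since the $\chi^2$ laws are Gamma laws, which satisfy a Poincar\'e inequality with a constant that stays under control as the number of degrees of freedom varies — this is the point where both $\beta\to\infty$ and $\beta\to0$ must be accommodated — the product law of $B_N$ obeys $\Var(\Phi)\le\frac{C}{\beta M}\Ex\big[\sum_{k,l}(\partial_{B_{kl}}\Phi)^2\big]$ for smooth $\Phi$. Applied to $\Phi=\tr h(J_N)$, together with $\partial_{B_{kl}}\tr h(J_N)=2(h'(J_N)B_N)_{kl}$, this yields
\[
	\Var\!\big(\sqrt\beta\,\tr h(J_N)\big)\ \le\ \frac{C}{M}\,\Ex\big[\tr\big(h'(J_N)^2J_N\big)\big].
\]
Combined with a priori bounds $\Ex\tr J_N^m\le C_mN$ — which control the number of eigenvalues lying outside a fixed interval $[0,R]$ with $R>(1+\sqrt\gamma)^2$ — this lets me approximate $f$ by polynomials on $[0,R]$ in the $C^1$ norm and show that the discarded part contributes a vanishing amount to the variance; the hypothesis that $f'$ has polynomial growth enters precisely here. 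Both (i) and (ii) are thereby reduced to $f(x)=x^p$.

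Fix $p\ge1$ and set $W_{N,i}:=\sqrt\beta\big((J_N^p)_{ii}-\Ex(J_N^p)_{ii}\big)$, so that $\sqrt\beta\big(\tr J_N^p-\Ex\tr J_N^p\big)=\sum_{i=1}^N W_{N,i}$. Since $J_N$ is tridiagonal, $(J_N^p)_{ii}$ depends only on $a_{i-p},\dots,a_{i+p}$ and $b_{i-p},\dots,b_{i+p}$; because the $a_i$'s are mutually independent, the $b_i$'s are mutually independent, and the two families are independent of each other, $\{W_{N,i}\}_i$ is a $2p$-dependent triangular array. Using the uniform moment bounds for the rescaled $\chi$ variables I would verify a Lyapunov-type condition $\sum_i\Ex|W_{N,i}|^{2+\delta}\to0$ (for the bulk indices this relies on the concentration of $\chi^2_{\beta k}$ when $\beta k\to\infty$ in regime (i), and on mere uniform boundedness in regime (ii); the $O(p)$ edge indices are negligible in either case), and then invoke the central limit theorem for $m$-dependent triangular arrays to obtain $\sum_i W_{N,i}\dto\Normal(0,\sigma^2)$ with $\sigma^2=\lim_{N\to\infty}\beta\Var(\tr J_N^p)$, provided this limit exists.

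It remains to compute $\lim_{N\to\infty}\beta\Var(\tr J_N^p)$. Expanding $(J_N^p)_{ii}$ over lattice paths of length $p$ makes $\beta\Var(\tr J_N^p)$ a finite sum of expressions $\beta\sum_{i,j}(\cdots)\,\Cov\big(\chi\text{-monomial near }i,\ \chi\text{-monomial near }j\big)$; the covariances vanish unless $|i-j|=O(p)$, so each expression is a Riemann sum in $i/N$, and letting $N\to\infty$ (using $\Ex\chi^2_{\beta k}=\beta k$, $\Var\chi^2_{\beta k}=2\beta k$, and the convergence of the rescaled degrees of freedom) produces an explicit integral. In regime (ii) the resulting $\sigma^2_{f,c}$ is a constant depending on $c,\gamma$ and the degree, which is all that is asserted. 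In regime (i) the $\chi^2$'s concentrate, only their asymptotically Gaussian fluctuations survive, and the limit agrees with the classical variance of linear eigenvalue statistics of $\beta$-Laguerre ensembles; carrying the $C^1$-approximation of the reduction step through this identification gives the stated $\sigma_f^2$. The $m$-dependent CLT is routine; I expect the main obstacles to be (a) securing the Poincar\'e inequality and the eigenvalue localization uniformly over all admissible $\beta$, and (b) the combinatorial bookkeeping that rewrites the path expansion of $\beta\Var(\tr J_N^p)$ in the closed form with kernel $\dfrac{4\gamma-(x-\gamma_m)(y-\gamma_m)}{\sqrt{4\gamma-(x-\gamma_m)^2}\sqrt{4\gamma-(y-\gamma_m)^2}}$ after the change of variables parametrizing the support of $mp_\gamma$ — this last computation being the real heart of part (i).
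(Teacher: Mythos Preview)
Your reduction from $C^1$ test functions to polynomials has a genuine gap. You propose to approximate $f$ in $C^1$ on a compact interval $[0,R]$ with $R>(1+\sqrt\gamma)^2$ and to kill the contribution from $\{x>R\}$ by moment bounds. In regime~(ii) this cannot work as stated: the limiting measure $\nu_{\gamma,c}$ has \emph{unbounded} support (its density is $\propto x^{\alpha}e^{-x}/|\Psi|^2$, see Lemma~\ref{lem:associated-Laguerre}), so no choice of $R$ localizes the eigenvalues. More fundamentally, even in regime~(i) the Poincar\'e bound on the remainder involves $\Ex[\bra{L_N,\, x(f'-p')^2\mathbf 1_{\{x>R\}}}]$, and while the polynomial-growth hypothesis controls the $f'$ piece, the approximating polynomial $p'$ has degree $d=d(\epsilon,R)$ which blows up as $\epsilon\to0$; the tail term then scales like $\Ex[\bra{L_N,x^{2d-1}}]$, and moment bounds alone do not make this small uniformly in the choice of $p$. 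The paper avoids this circularity by approximating $f'$ (not $f$) by polynomials $p_k$ directly in $L^2\big((1+x^2)\,d\nu_{\gamma,c}\big)$, which is possible because $\nu_{\gamma,c}$ is determined by its moments (M.~Riesz's theorem). Then for each fixed $k$ the function $x(f'-p_k)^2$ is continuous of polynomial growth, so $\Ex[\bra{L_N,x(f'-p_k)^2}]\to\bra{\nu_{\gamma,c},x(f'-p_k)^2}\le\tfrac12\|f'-p_k\|^2_{L^2((1+x^2)d\nu_{\gamma,c})}$, and a standard double-limit lemma (Lemma~\ref{lem:CLT-triangle}) finishes the argument in both regimes at once.

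Your treatment of the polynomial case via an $m$-dependent triangular-array CLT is a legitimate alternative to the paper's martingale route (Theorem~3.4 in~\cite{Trinh-2017}), and your Lyapunov check is fine. But for the closed-form variance in~(i) you are making life hard: the paper shows $\Var[\bra{L_N,p}]=\sum_{k\ge2}\beta\,\ell_{p;k}(\beta)/(\beta N)^k$ with $\deg_\beta\ell_{p;k}\le k-2$, so $\sigma_p^2=\ell_{p;2}$ is a constant independent of $\beta$ and therefore equals the already-known Wishart/Laguerre ($\beta=1,2$) value. This one-line observation replaces the entire path-expansion computation you flag as ``the real heart of part~(i).''
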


The paper is organized as follows. In the next section, we introduce the random tridiagonal matrix model and related concepts. Results on convergence to a limit and Gaussian fluctuations around the limit are shown in Section~3 and Section~4, respectively.

\section{Random tridigonal  matrix model and spectral measures}
\subsection{Random tridiagonal matrix model}

Let $G$ be an $M\times N$ random matrix consisting of i.i.d.\ real standard Gaussian random variables. Then $X = M^{-1} G^t G$ is called a Wishart matrix. When  $M \ge N$, the eigenvalues $\lambda_1, \dots, \lambda_N$ of $X$ have the following joint density
\[
	\frac{1}{Z_{M, N}} |\Delta(\lambda)| \prod_{i = 1}^N \left( \lambda_i^{\frac12(M - N + 1) - 1} e^{-\frac M2 \lambda_i} \right),\quad (\lambda_i > 0),
\] 
where $Z_{M,N}$ is the normalizing constant, and $\Delta(\lambda)=\prod_{i < j}(\lambda_j - \lambda_i)$ denotes the Vandermonde determinant. 
A Laguerre matrix $X = M^{-1} G^* G$ is obtained when the entries of $G$ are i.i.d.\ of  standard complex Gaussian distribution, where recall that $G^*$ stands for the Hermitian conjugate of $G$. In this case, the joint density of the eigenvalues has an analogous formula to the Wishart case. Then $\beta$-Laguerre ensembles are defined to be ensembles of $N$ positive particles with the joint density 
\begin{equation}\label{bLE-2}
	\frac{1}{Z_{M, N}^{(\beta)}} |\Delta(\lambda)|^\beta \prod_{i = 1}^N \left( \lambda_i^{\frac\beta2(M - N + 1) - 1} e^{-\frac {\beta M}{2} \lambda_i} \right),\quad (\lambda_i > 0),
\end{equation}
where $\beta > 0$ and $M > N - 1$, which generalizes Wishart $(\beta = 1)$ and Laguerre $(\beta = 2)$ ensembles.

A random tridiagonal matrix model for $\beta$-Laguerre ensembles was introduced in \cite{DE02} based on tridiagonalizing Wishart or Laguerre matrices. Let 
\[
	B_N = \frac{1}{\sqrt{\beta M}} \begin{pmatrix}
		\chi_{\beta M}	\\
		\chi_{\beta (N-1)}	&\chi_{\beta(M - 1)}	\\
		&\ddots	&\ddots\\
		&&\chi_\beta	&\chi_{\beta(M - N + 1)}
	\end{pmatrix}
\]
denote a random bidiagonal matrix with independent entries. Then the eigenvalues of the tridiagonal matrix $J_N = B_N (B_N)^t$ are distributed as the $\beta$-Laguerre ensembles~\eqref{bLE-2}. Using this random matrix model and combinatorial arguments, the convergence to Marchenko--Pastur distributions and Gaussian fluctuations around the limit (only for polynomial test functions) were established in \cite{DE06}.

\subsection{Spectral measures of Jacobi matrices}

A symmetric tridiagonal matrix is called a Jacobi matrix.
Spectral measures of Jacobi matrices $\{J_N\}$ have been studied recently. Here the spectral measure of $J_N$ is defined to be the probability measure $\mu_N$ satisfying 
\[
	\int x^k d\mu_N(x) = (J_N)^k(1,1), \quad k = 0,1,2,\dots.
\]
It follows from the spectral decomposition of $J_N$ that $\mu_N$ has the following form 
\[
	\mu_N = \sum_{i = 1}^N q_i^2 \delta_{\lambda_i},\quad (q_i^2 = v_i(1)^2),
\]
where $v_1, \dots, v_N$ are the normalized eigenvectors of $J_N$ corresponding to the eigenvalues $\lambda_1, \dots, \lambda_N$.

Spectral measures can also be defined for infinite Jacobi matrices. Let $J$ be an infinite Jacobi matrix,
\[
	J = \begin{pmatrix}
		a_1	&b_1	\\
		b_1	&a_2		&b_2\\
		&\ddots	&\ddots	&\ddots
	\end{pmatrix}, \quad a_i \in \R, b_i > 0.
\]
Then there exists a probability measure $\mu$ such that 
\[
	\int x^k d\mu(x) = J^k(1,1), \quad k = 0,1,\dots.
\]
When the above moment problem has a unique solution $\mu$, the measure $\mu$ is called the spectral measure of $J$. A simple, but useful sufficient condition for the uniqueness is given by \cite[Corollary 3.8.9]{Simon-book-2011}
\[
	\sum_{n=1}^\infty \frac{1}{b_n} = \infty.
\]

By definition, moments of the spectral measure $\mu_N$ depend locally on upper left entries of $J_N$, and thus, the limiting behavior of $\mu_N$ follows easily from those of entries. In particular, for fixed $\beta$, as $N \to \infty$ with $N/M \to \gamma \in (0,1)$, entry-wisely, 
\[
	B_N = \frac{1}{\sqrt{\beta M}} \begin{pmatrix}
		\chi_{\beta M}	\\
		\chi_{\beta (N-1)}	&\chi_{\beta(M - 1)}	\\
		&\ddots	&\ddots\\
		&&\chi_\beta	&\chi_{\beta(M - N + 1)}
	\end{pmatrix}
	\to \begin{pmatrix}
		1	\\
		\sqrt{\gamma}	&1	\\
		&\ddots	&\ddots\\
		
	\end{pmatrix}.
\]
Here the convergence holds almost surely and in $L_q$ for any $q \in [1, \infty)$. It follows that almost surely, the spectral measure $\mu_N$ converges weakly to the spectral measure of the following infinite Jacobi matrix 
\begin{equation}\label{MP-gamma}
	MP_\gamma = \begin{pmatrix}
		1\\
		\sqrt{\gamma}		&1\\
		&\ddots 	&\ddots
	\end{pmatrix}
	\begin{pmatrix}
		1	&\sqrt{\gamma}\\
		&1	&\sqrt{\gamma}		\\
		&&\ddots 	&\ddots
	\end{pmatrix}
	=
	\begin{pmatrix}
		1		&\sqrt{\gamma}	\\
		\sqrt{\gamma}		&1+\gamma		&\sqrt{\gamma}\\
		&\sqrt{\gamma}		&1+\gamma		&\sqrt{\gamma}\\
		&&\ddots 	&\ddots 	&\ddots
	\end{pmatrix},
\end{equation}
which is nothing but the Marchenko--Pastur distribution with parameter $\gamma$ \cite{Trinh-ojm-2018}.

For the Jacobi matrix $J_N$, the weights $q_1^2, \dots, q_N^2$ in the spectral measure $\mu_N$ are independent of the eigenvalues and have Dirichlet distribution with parameter $\beta/2$. From which, the empirical distribution $L_N$ and the spectral measure $\mu_N$ have the following relations
\begin{align}
	 \Ex[\bra{L_N, f}] &= \Ex[\bra{\mu_N, f}] , \label{same-mean}\\
	\Var[\bra{L_N, f}] &= \frac{\beta N + 2}{\beta N} \Var[\bra{\mu_N, f}] - \frac{2}{\beta N} \left(\Ex[\bra{\mu_N, f^2}] - \Ex[\bra{\mu_N, f}]^2 \right), \label{relation-of-variance} 
\end{align}
for suitable test functions $f$. Here we use the notation $\bra{\mu, f}$ to denote the integral $\int f d\mu$ for a measure $\mu$ and an integrable function $f$.

We conclude this section by giving several remarks on spectral measures of Jacobi matrices. The spectral measure $\mu$ orthogonalizes the sequence of polynomials $\{P_n\}_{n \ge 0}$ defined by 
\begin{align*}
	&P_0 = 1, \quad P_1 = x - a_1,\\
	&P_{n+1} = x P_n - a_{n + 1}P_n - b_n^2 P_{n - 1}, \quad n \ge 1.
\end{align*}
In a particular case, $a_n =(\alpha + 2n-1)$ and $b_n = \sqrt{n}\sqrt{\alpha + n}$, the sequence of polynomials  $\{L_n\}$ defined by 
\begin{align*}
	&L_0 = 1, \quad L_1 = x - (\alpha + 1),\\
	&L_{n+1} = x L_n - (\alpha + 2n + 1)L_n - n(\alpha + n) L_{n - 1}, \quad n \ge 1,
\end{align*}
coincides with the sequence of scaled generalized Laguerre polynomials. The spectral measure in this case is the Gamma distribution with parameters $(\alpha+1, 1)$, that is, the probability measure with density $\Gamma(\alpha + 1)^{-1} x^\alpha e^{-x}, x > 0$. In other words, the Gamma distribution with parameters $(\alpha + 1,1)$ is the spectral measure of the following infinite Jacobi matrix $J_\alpha$,
\begin{align*}
	J_\alpha &= \begin{pmatrix}
		\alpha + 1	& \sqrt{\alpha + 1} \\
		\sqrt{\alpha + 1} 	&\alpha + 3	& \sqrt{2}\sqrt{ \alpha + 2} 	\\
		&&\ddots	&\ddots	&\ddots 
	\end{pmatrix} \\
	&=	\begin{pmatrix}
		\sqrt{\alpha + 1}	\\
		\sqrt 1	&\sqrt{\alpha + 2}	\\
		%&\sqrt{2}		&\sqrt{\alpha + 3}	\\
		&\ddots	&\ddots	
	\end{pmatrix}
	\begin{pmatrix}
		\sqrt{\alpha + 1}	&\sqrt1	\\
			&\sqrt{\alpha + 2}	&\sqrt{2}	\\
			%&&\sqrt{\alpha + 3}	&\sqrt{3}	\\
			&&\ddots	&\ddots	
	\end{pmatrix}.
\end{align*}

When the entries of $J_\alpha$ are `shifted' by a constant $c \in \R$, the resulting orthogonal polynomials are called associated Laguerre polynomials. The spectral measure in this case was explicitly calculated in \cite{Ismail-et-al-1988} as Model II for associated Laguerre orthogonal polynomials.

\begin{lemma}[\cite{Ismail-et-al-1988}]\label{lem:associated-Laguerre}
For $c > -1$ and $\alpha + c + 1 > 0$, let 
\[
	J_{\alpha, c} = 
	\begin{pmatrix}
		\sqrt{\alpha + c + 1} \\
		\sqrt{c + 1}	& \sqrt{\alpha + c + 2}\\
		&\ddots	&\ddots
	\end{pmatrix}
	\begin{pmatrix}
		\sqrt{\alpha + c + 1} & \sqrt{c + 1}	\\
		&\sqrt{\alpha + c + 2}	&\sqrt{c+2}\\
		&&\ddots	&\ddots
	\end{pmatrix},
\]
and $\mu_{\alpha,c}$ be its spectral measure. Then the density and the Stieltjes transform of $\mu_{\alpha, c}$ are given by
\begin{align*}
	\mu_{\alpha, c}(x) &= \frac{1}{\Gamma(c+1) \Gamma(1+c+\alpha)} \frac{x^{\alpha} e^{-x}}{|\Psi(c, -\alpha; x e^{-i \pi})|^2}, \quad x \ge 0,\\
	S_{\mu_{\alpha, c}}(z) &= \int_0^\infty \frac{\mu_{\alpha, c} (x) dx}{x - z} = \frac{\Psi(c+1, 1 -\alpha; -z)}{\Psi(c, -\alpha; -z)}, \quad z \in \C \setminus \R.
\end{align*}
Here $\Psi(a, b; z)$ is Tricomi's confluent hypergeometric function.
\end{lemma}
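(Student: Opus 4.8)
The statement is quoted from \cite{Ismail-et-al-1988}, so the plan is to reduce it to the computation carried out there for associated Laguerre orthogonal polynomials, while making the translation between the two setups explicit. First I would read off the three-term recurrence encoded by the Jacobi matrix $J_{\alpha,c}$: writing $J_{\alpha,c}=L L^t$ with $L$ the displayed lower-bidiagonal matrix, a direct multiplication gives diagonal entries $a_n = (\alpha+c+n)+(c+n-1) = \alpha + 2(c+n) - 1$ (with the convention that the $(c+0)$-term is absent in $a_1$, yielding $a_1 = \alpha+c+1$) and off-diagonal entries $b_n = \sqrt{\alpha+c+n}\,\sqrt{c+n}$ for $n\ge 1$. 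Comparing with the recurrence for $\{L_n\}$ in the text, namely $a_n^{(0)} = \alpha+2n-1$, $b_n^{(0)} = \sqrt{n}\sqrt{\alpha+n}$, one sees that $J_{\alpha,c}$ is obtained from $J_\alpha$ by the index shift $n \mapsto n+c$; this is exactly the ``shift by $c$'' alluded to just before the lemma, and it identifies the orthogonal polynomials of $\mu_{\alpha,c}$ with the associated Laguerre polynomials of association parameter $c$ studied in \cite{Ismail-et-al-1988}.

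Next I would verify that the moment problem for $J_{\alpha,c}$ is determinate, so that ``the spectral measure'' is well defined: since $b_n = \sqrt{(\alpha+c+n)(c+n)} \sim n$ as $n\to\infty$, we have $\sum_n b_n^{-1} = \infty$, and the Carleman-type criterion \cite[Corollary 3.8.9]{Simon-book-2011} quoted in the text applies. With uniqueness in hand, the measure is pinned down by its Stieltjes transform, and the Stieltjes transform of a Jacobi matrix is the continued fraction $S(z) = (a_1 - z - b_1^2/(a_2 - z - b_2^2/(\cdots)))^{-1}$; under the shift $n\mapsto n+c$ this continued fraction is precisely the one whose value \cite{Ismail-et-al-1988} evaluates in closed form as a ratio of Tricomi functions, $\Psi(c+1,1-\alpha;-z)/\Psi(c,-\alpha;-z)$. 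The density formula then follows from the Stieltjes inversion formula together with the connection formula for $\Psi$ across the branch cut on the positive real axis (writing $x e^{-i\pi}$ for the boundary value), which produces the factor $|\Psi(c,-\alpha;xe^{-i\pi})|^{-2}$ and the Gamma-function normalization; alternatively one cites the density directly from \cite{Ismail-et-al-1988} as ``Model II''.

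The only real work, and the part I would spell out most carefully, is the bookkeeping in the first step: matching the Jacobi parameters of the factorized matrix $J_{\alpha,c}=LL^t$ with the $(a_n,b_n)$ appearing in the associated-Laguerre recurrence of \cite{Ismail-et-al-1988}, including getting the boundary term $a_1$ and the half-integer shifts right, and checking that the normalization constants $\Gamma(c+1)\Gamma(1+c+\alpha)$ and the conditions $c>-1$, $\alpha+c+1>0$ are exactly those guaranteeing positivity of all the $b_n^2$ and hence a genuine probability measure. Once the dictionary is fixed, the density and Stieltjes-transform formulas are quotations; the potential pitfall is purely a matter of conventions (scaling of the Laguerre weight $x^\alpha e^{-x}$ versus $x^\alpha e^{-x/2}$, and the placement of the association parameter), so I would state the recurrence explicitly and cross-check it against the known $c=0$ case, where $\mu_{\alpha,0}$ must reduce to the Gamma$(\alpha+1,1)$ distribution with Stieltjes transform $\Psi(1,1-\alpha;-z)/\Psi(0,-\alpha;-z)$, recovering the $J_\alpha$ computation recalled in the text.
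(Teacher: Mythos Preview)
The paper does not supply a proof of this lemma; it is stated as a quotation from \cite{Ismail-et-al-1988} (specifically identified in the surrounding text as ``Model II'' for associated Laguerre polynomials), and is used as a black box. Your proposal is therefore not competing with a proof in the paper but rather filling in the translation that the paper leaves implicit.

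That translation is done correctly. Your computation of the Jacobi parameters from the factorization $J_{\alpha,c}=LL^t$ gives $a_1=\alpha+c+1$, $a_n=\alpha+2c+2n-1$ for $n\ge 2$, and $b_n=\sqrt{(c+n)(\alpha+c+n)}$, and you rightly flag the boundary anomaly at $n=1$ (the naive shift $n\mapsto n+c$ would give $a_1=\alpha+2c+1$). This anomaly is exactly what distinguishes Model~II from Model~I in \cite{Ismail-et-al-1988}, so the identification is the right one, not a bug. Your determinacy check via $\sum b_n^{-1}=\infty$ and the $c=0$ sanity check against the $\Gam(\alpha+1,1)$ case are both sound and match the paper's framing. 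Once the dictionary is fixed, the Stieltjes transform and density formulas are indeed direct quotations from \cite{Ismail-et-al-1988}, so there is nothing further to prove.
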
 

Note that when $\alpha$ is not an integer, an alternative formula for $\Psi(a, b; z)$ could be used
\begin{align*}
	\Psi(c,  - \alpha; xe^{-i \pi}) &=  \frac{\Gamma(\alpha + 1)}{\Gamma(\alpha + c + 1)} \,_1F_1(c; -\alpha; -x) \\
	&\qquad - \frac{\Gamma(-\alpha - 1)}{\Gamma(c)} x^{\alpha + 1} e^{-i\pi \alpha} \,_1F_1 (\alpha + c + 1; 2+\alpha; -x) ,
\end{align*}
where ${}_1F_1(a; b; z)$ is the Kummer function.

\section{Convergence to a limit distribution}
In what follows, the parameter $M$ depends on $N$ in the way that $N/M \to \gamma \in (0,1)$ as $N \to \infty$. We study the limiting behavior of the empirical distribution $L_N$ through its moments $\bra{L_N, x^r}, r=0,1,2,\dots$. Recall from the identity \eqref{same-mean} that for $r = 0,1,2,\dots,$
\[
	\Ex[\bra{L_N, x^r}] = \Ex \bigg[\frac1N \tr [(J_N)^r] \bigg] = \Ex[\bra{\mu_N, x^r}] = \Ex[(J_N)^r(1,1)].
\]
Here $\tr[A]$ denotes the trace of a matrix $A$.

\subsection{The Marchenko--Pastur regime, $\beta N \to \infty$}
A key observation in this regime is the following asymptotic behavior of chi distributions.
\begin{lemma}
As $k \to \infty$,
	\[
		\frac{\chi_k}{\sqrt{k}} \to  1 \quad \text{ in $L^q$ for any $1 \le q < \infty$.}
	\]
	
\end{lemma}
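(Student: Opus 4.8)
The plan is to prove convergence $\chi_k/\sqrt{k} \to 1$ in $L^q$ by a standard truncation-plus-moment argument, exploiting the explicit moments of the chi distribution. Recall that if $\chi_k$ has the chi distribution with $k$ degrees of freedom, then $\chi_k^2$ is a sum of $k$ i.i.d.\ squared standard Gaussians, so $\Ex[\chi_k^2] = k$ and $\Var[\chi_k^2] = 2k$. More generally, the moments $\Ex[\chi_k^p] = 2^{p/2}\Gamma((k+p)/2)/\Gamma(k/2)$ are available in closed form, and using $\Gamma(k/2 + p/2)/\Gamma(k/2) = (k/2)^{p/2}(1 + O(1/k))$ as $k \to \infty$ (Stirling), one gets $\Ex[(\chi_k/\sqrt k)^p] \to 1$ for every fixed $p \ge 0$. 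In particular $\Ex[(\chi_k/\sqrt k)^2] \to 1$ and $\Ex[(\chi_k/\sqrt k)^4] \to 1$, so $\Var[(\chi_k/\sqrt k)^2] = \Ex[(\chi_k/\sqrt k)^4] - \Ex[(\chi_k/\sqrt k)^2]^2 \to 0$; hence $(\chi_k/\sqrt k)^2 \to 1$ in $L^2$, and therefore $\chi_k/\sqrt k \to 1$ in probability.

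To upgrade convergence in probability to $L^q$ convergence for arbitrary $q \in [1,\infty)$, I would invoke uniform integrability of the family $\{(\chi_k/\sqrt k)^q : k \ge 1\}$. This is immediate from the moment computation above: for any $q$, choosing $p = 2q$ gives $\sup_k \Ex[(\chi_k/\sqrt k)^{2q}] = \sup_k \Ex[(\chi_k/\sqrt k)^p] < \infty$ (the sequence converges to $1$, hence is bounded), which bounds the $L^2$-norm of $(\chi_k/\sqrt k)^q$ uniformly in $k$ and thus yields uniform integrability. Combining convergence in probability with uniform integrability of the $q$-th powers gives $\chi_k/\sqrt k \to 1$ in $L^q$; equivalently, $\Ex[|\chi_k/\sqrt k - 1|^q] \to 0$. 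One can also argue more directly: $|\chi_k/\sqrt k - 1| \le |\chi_k^2/k - 1|$ on the relevant range once $\chi_k/\sqrt k$ is close to $1$, so an $L^q$ bound on $\chi_k^2/k - 1$ (which is a normalized sum of i.i.d.\ mean-zero variables with finite moments of all orders, controlled by Rosenthal's inequality or simply by the explicit moment formula) transfers to $\chi_k/\sqrt k - 1$.

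The only mild obstacle is making the Stirling asymptotics for $\Gamma((k+p)/2)/\Gamma(k/2)$ precise enough to conclude $\Ex[(\chi_k/\sqrt k)^p] \to 1$ rather than just boundedness; this is routine since $\log\Gamma(x+a) - \log\Gamma(x) = a\log x + O(1/x)$ uniformly for bounded $a$, but it should be stated carefully because it simultaneously delivers both the limit (for the second and fourth moments, giving convergence in probability) and the uniform bound (for the $2q$-th moment, giving uniform integrability). Everything else is a direct application of the de la Vallée–Poussin / Vitali convergence theorem.
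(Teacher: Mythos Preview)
The paper does not prove this lemma; it is stated as an elementary fact and used without justification. Your proposal supplies a correct and complete proof: compute the moments $\Ex[\chi_k^p]=2^{p/2}\Gamma((k+p)/2)/\Gamma(k/2)$, use Stirling to get $\Ex[(\chi_k/\sqrt k)^p]\to 1$ for every fixed $p\ge 0$, deduce convergence in probability from the $L^2$ convergence of $(\chi_k/\sqrt k)^2$, and upgrade to $L^q$ via uniform integrability (supplied by the bounded $2q$-th moments) and Vitali's theorem. Nothing is missing.

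One small remark on your alternative direct route: the inequality $|a-1|\le |a^2-1|$ in fact holds for every $a\ge 0$, since $|a^2-1|=|a-1|\,|a+1|$ and $|a+1|\ge 1$ there; so no ``close to $1$'' truncation is needed, and an $L^q$ bound on $\chi_k^2/k - 1$ (e.g.\ via the explicit moments or Rosenthal) transfers immediately to $\chi_k/\sqrt k - 1$. Either path suffices.
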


Let $\{c_i\}_{i = 1}^N$ and $\{d_i\}_{i = 1}^{N-1}$ be the diagonal and the sub-diagonal of $B_N$, respectively. Note that although we do not write explicitly, $\{c_i\}$ and $\{d_i\}$ depend on the triple $(N, M, \beta)$.
It is clear that as $\beta N \to \infty$, 
\[
	B_N = \frac{1}{\sqrt{\beta M}} \begin{pmatrix}
		\chi_{\beta M}	\\
		\chi_{\beta (N-1)}	&\chi_{\beta(M - 1)}	\\
		&\ddots	&\ddots\\
		&&\chi_\beta	&\chi_{\beta(M - N + 1)}
	\end{pmatrix}
	\to \begin{pmatrix}
		1	\\
		\sqrt{\gamma}	&1	\\
		&\ddots	&\ddots\\
		
	\end{pmatrix}.
\]
Here the convergence means the pointwise convergence of entries, which holds in $L^q$ for any $q \in [1, \infty)$, that is, for fixed $i$, as $N \to \infty$ with $\beta N \to \infty$,
\begin{equation}\label{Lp-convergence-of-cd}
	c_i \to 1, \quad d_i \to \sqrt{\gamma}	\quad \text{in $L^q$ for $q\in [1, \infty)$.}
\end{equation}

Since $J_N$ is a tridiagonal matrix of the form
\[
	J_N = B_N (B_N)^t = \begin{pmatrix}
		c_1^2	&c_1d_1\\
		c_1d_1	&c_2	^2 + d_1^2 	&c_2d_2\\
		&\ddots	&\ddots	&\ddots\\
		&&c_{N-1}d_{N - 1}	&c_N^2 + d_{N-1}^2
	\end{pmatrix},
\]
it follows that for fixed $r \in \N$, when $N>r$, $(J_N)^r(1,1)$ is a polynomial of $\{c_i, d_i\}_{i \le \frac{r+1}2}$. Let us show some explicit formulae for $(J_N)^r(1,1)$,
\begin{align*}
	J_N(1,1) &= c_1^2,\\
	(J_N)^2(1,1) &= c_1^4 + c_1^2 d_1^2,\\
	(J_N)^3(1,1) &= c_1^6 + 2 c_1^4 d_1^2 + c_1^2 c_2^2 d_1^2 + c_1^2 d_1^4.
\end{align*}
Then the pointwise convergence \eqref{Lp-convergence-of-cd} implies that as $N \to \infty$ with $\beta N \to \infty$,
\[
	(J_N)^r(1,1) \to (MP_\gamma)^r(1,1)		\quad \text{in $L^q$ for any $q \in [1,\infty)$,}
\]
where recall that the Jacobi matrix $MP_\gamma$ is given in \eqref{MP-gamma}. 
Consequently, as $N \to \infty$,
\[
	\Ex[(J_N)^r(1,1) ] \to  (MP_\gamma)^r(1,1).
\]

Recall also that the spectral measure of $MP_\gamma$ is the Marchenko--Pastur distribution with parameter $\gamma$. Therefore,  in this regime, the following convergence of the mean values holds. 
\begin{lemma}\label{lem:mean-infinity}
	For any $r \in \N$, as $N \to \infty$ with $\beta N \to \infty$,
	\[
		\Ex[\bra{L_N, x^r}] = \Ex\left[\frac1N  \tr[ (J_N)^r] \right ]  = \Ex[(J_N)^r(1,1) ] \to \bra{mp_\gamma, x^r}.
	\]
\end{lemma}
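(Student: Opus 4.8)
The plan is to reduce everything to the componentwise $L^q$-convergence \eqref{Lp-convergence-of-cd} of the entries of $B_N$. First, the chain of equalities in the statement is just a matter of unwinding definitions: $\bra{L_N,x^r}=\frac1N\sum_{i=1}^N\lambda_i^r=\frac1N\tr[(J_N)^r]$ by definition of $L_N$, and $\Ex[\frac1N\tr[(J_N)^r]]=\Ex[\bra{\mu_N,x^r}]=\Ex[(J_N)^r(1,1)]$ by the mean identity \eqref{same-mean} together with the moment characterization of the spectral measure $\mu_N$. So it remains to show
\[
	\Ex[(J_N)^r(1,1)] \to (MP_\gamma)^r(1,1) = \bra{mp_\gamma,x^r},
\]
where the last equality is precisely the fact, recorded around \eqref{MP-gamma}, that $mp_\gamma$ is the spectral measure of the Jacobi matrix $MP_\gamma$.

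Next I would make explicit the finite-dimensional nature of $(J_N)^r(1,1)$: for $N>r$ this quantity depends only on the top-left corner of $J_N$, hence equals a fixed polynomial $P_r$ — a finite sum of monomials, the same for all $N>r$ — in the finitely many variables $\{c_i,d_i\}_{i\le\frac{r+1}{2}}$. By construction $c_i=\chi_{\beta(M-i+1)}/\sqrt{\beta M}$ and $d_i=\chi_{\beta(N-i)}/\sqrt{\beta M}$; since $\beta N\to\infty$ while $N/M\to\gamma\in(0,1)$, for each fixed $i$ one has $\beta(M-i+1)\to\infty$, $\beta(N-i)\to\infty$, together with $(M-i+1)/M\to1$ and $(N-i)/M\to\gamma$. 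Writing $\chi_k/\sqrt{\beta M}=(\chi_k/\sqrt k)\sqrt{k/(\beta M)}$ and applying the asymptotic relation $\chi_k/\sqrt k\to1$ in $L^q$ then gives $c_i\to1$ and $d_i\to\sqrt\gamma$ in $L^q$ for every $q\in[1,\infty)$, which is exactly \eqref{Lp-convergence-of-cd}.

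Then I would propagate this through $P_r$. Each of the finitely many factors appearing in $P_r$ converges in every $L^q$ and, being of the form $\chi_k/\sqrt{\beta M}$ with $k/(\beta M)$ bounded, has $L^q$-norm bounded uniformly in $N$; so by H\"older's inequality each monomial converges in every $L^q$ to the product of the limits, and summing the finitely many monomials yields $(J_N)^r(1,1)\to (MP_\gamma)^r(1,1)$ in $L^q$, in particular in $L^1$. Convergence in $L^1$ gives convergence of the expectations, which is the displayed limit and completes the proof.

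The only step with content beyond bookkeeping is the last one — passing from componentwise $L^q$-convergence to $L^q$-convergence of the polynomial $P_r$ — and even that is routine once one records the uniform moment bounds for $\chi_k/\sqrt{\beta M}$ and invokes H\"older; the identification of the limit as $\bra{mp_\gamma,x^r}$ needs nothing new, being already contained in the discussion of $MP_\gamma$ in Section~2.
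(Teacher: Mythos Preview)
Your proof is correct and follows exactly the approach the paper uses: reduce to the polynomial expression of $(J_N)^r(1,1)$ in the finitely many entries $\{c_i,d_i\}_{i\le (r+1)/2}$, invoke the entrywise $L^q$-convergence~\eqref{Lp-convergence-of-cd}, and pass the limit through the polynomial to obtain convergence in $L^1$ and hence of expectations, identifying the limit via the spectral measure of $MP_\gamma$. Your write-up is in fact more explicit than the paper's (which leaves the H\"older/uniform-moment step implicit), but the route is the same.
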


\subsection{High temperature regime, $\beta N \to 2c \in (0, \infty)$}

Let 
\[
	B_\infty :=\begin{pmatrix}
		\chitilde_{\frac{2c}\gamma}		\\
		\chitilde_{2c}	&\chitilde_{\frac{2c}\gamma}\\
		&\ddots 	&\ddots
	\end{pmatrix}, \quad 
		\chitilde_k = \frac{1}{\sqrt2} \chi_k,
\] 
be an infinite bidiagonal matrix with independent entries.
Then as $\beta N \to 2c$,
\[
	B_N = \frac{1}{\sqrt{\beta M}} \begin{pmatrix}
		\chi_{\beta M}	\\
		\chi_{\beta (N-1)}	&\chi_{\beta(M - 1)}	\\
		&\ddots	&\ddots\\
		&&\chi_\beta	&\chi_{\beta(M - N + 1)}
	\end{pmatrix}
	\dto \frac{\sqrt{\gamma}}{\sqrt c}B_\infty,
\]
meaning that entries of $B_N$ converge in distribution to the corresponding entries of the infinite matrix $\frac{\sqrt{\gamma}}{\sqrt c}B_\infty$. Recall that $(J_N)^r(1,1)$ is a polynomial of $\{c_i, d_i\}_{i \le \frac{r+1}2}$. Moreover, the entries of $B_N$ and those of $B_\infty$ are independent. Therefore we can deduce that as $\beta N \to 2c$, 
\begin{align*}
	(J_N)^r(1,1) &\dto (J_\infty)^r(1,1),\\
	\Ex[(J_N)^r(1,1) ] &\to \Ex[(J_\infty)^r(1,1) ],
\end{align*}
where
\[
	J_\infty = \frac{\gamma}{c} B_\infty (B_\infty)^t.
\] 
By this approach, we have just shown the existence of the limit of the mean values when $\beta N \to 2c$. However, we are not able to identify the limit directly from $J_\infty$.

To identify the limit, we now extend some ideas that used in \cite{DS15}  in case of Gaussian beta ensembles to show the following.
\begin{theorem}\label{thm:2c-unscale}
	Let $\nu_{\gamma, c}$ be the spectral measure of the following Jacobi matrix 
\[
	\frac{\gamma}{c} 
	 \begin{pmatrix}
		\sqrt{\frac{c}\gamma } \\
		\sqrt{c + 1}	& \sqrt{\frac c{\gamma} + 1}\\
			&\ddots	&\ddots
	\end{pmatrix}
	 \begin{pmatrix}
		\sqrt{\frac{c}\gamma }	&\sqrt{c + 1} \\
		& \sqrt{\frac c{\gamma} + 1}	&\sqrt{c + 2}\\
		&&\ddots	&\ddots
	\end{pmatrix} = \frac \gamma c J_{\frac c\gamma, c}.
\]
Then for any $r \in \N$, as $N \to \infty$ with $\beta N \to 2c$,
\[
	\Ex[\bra{L_N, x^r}] = \Ex \left[ \frac1N \tr [(J_N)^r ] \right] = \Ex[(J_N)^r(1,1) ] \to \bra{\nu_{\gamma, c} , x^r}.
\]
\end{theorem}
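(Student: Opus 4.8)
The plan is to relate the spectral measure of $J_N$ to that of an infinite Jacobi matrix whose entries are explicitly computable, and then to identify that infinite matrix with $\frac{\gamma}{c} J_{c/\gamma, c}$ so that Lemma~\ref{lem:associated-Laguerre} applies. The starting point is the identity already recorded, $\Ex[\bra{L_N, x^r}] = \Ex[(J_N)^r(1,1)]$, together with the observation that $(J_N)^r(1,1)$ is a fixed polynomial in the finitely many entries $\{c_i, d_i\}_{i \le (r+1)/2}$ of $B_N$. The key computational input will be the exact moments of the chi variables: if $\chitilde_k = \chi_k/\sqrt2$, then $\Ex[\chitilde_k^{2m}] = 2^{-m}k(k+2)\cdots(k+2m-2)$, and when $\beta N \to 2c$ one has $\beta(N-i) \to 2c$ and $\beta(M-i) \to 2c/\gamma$ for each fixed $i$. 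I would first verify, via these moment formulae and the independence of entries, that for each fixed $r$,
\[
	\Ex[(J_N)^r(1,1)] \;\longrightarrow\; \Ex\Big[\big(\tfrac{\gamma}{c} B_\infty (B_\infty)^t\big)^r(1,1)\Big],
\]
i.e. the limit already asserted informally in the excerpt; this is just convergence of a polynomial expression in entries whose joint moments converge. So the mean values converge, and the content of the theorem is the identification of the limit.

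The heart of the argument, following \cite{DS15}, is to replace the \emph{random} infinite matrix $J_\infty = \frac{\gamma}{c} B_\infty (B_\infty)^t$ by a \emph{deterministic} one with the same value of $\Ex[J_\infty^r(1,1)]$ for every $r$. The mechanism is the following: $J_\infty^r(1,1)$ is a sum over nearest-neighbour paths $1 = v_0, v_1, \dots, v_{2r} = 1$ in $\{1,2,\dots\}$ of products of the (squared) bidiagonal entries, and by independence its expectation factorizes into a product of moments $\Ex[\chitilde_{2c/\gamma}^{2a}]$ and $\Ex[\chitilde_{2c}^{2b}]$ over the edges of the path. I would argue that this expectation is unchanged if each $B_\infty$ is replaced by the deterministic bidiagonal matrix whose diagonal and sub-diagonal entries have prescribed squares — concretely, by choosing the deterministic entries so that the relevant products of moments are matched. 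The natural guess, read off from the Jacobi parameters $a_n = \alpha + 2n - 1$, $b_n = \sqrt{n}\sqrt{\alpha+n}$ appearing in the Laguerre recursion in the excerpt and their shift by $c$, is that $\Ex[J_\infty^r(1,1)]$ equals $\big(\frac{\gamma}{c} J_{c/\gamma, c}\big)^r(1,1)$, where $J_{c/\gamma,c}$ is exactly the deterministic Jacobi matrix of Lemma~\ref{lem:associated-Laguerre} with $\alpha = c/\gamma$. Once this matching of all moments is established, $\Ex[J_\infty^r(1,1)] = \bra{\nu_{\gamma,c}, x^r}$ by the definition of $\nu_{\gamma,c}$ as the spectral measure of that matrix, and the chain of equalities in the theorem follows. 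I would also note that the moment problem for $\nu_{\gamma,c}$ is determinate: the sub-diagonal entries of $\frac{\gamma}{c} J_{c/\gamma,c}$ grow like $\frac{\gamma}{c} n$, so $\sum 1/b_n = \infty$ diverges and the Carleman-type criterion \cite[Corollary 3.8.9]{Simon-book-2011} cited earlier applies, so the limiting moments genuinely determine $\nu_{\gamma,c}$.

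The main obstacle is the combinatorial bookkeeping in the moment-matching step: one must check that for \emph{every} $r$ and every nearest-neighbour excursion path, the product of chi-moments $\prod \Ex[\chitilde_{k}^{2m}]$ coincides with the product of the corresponding deterministic squared entries of $J_{c/\gamma,c}$ — i.e. that the combinatorial identities $\Ex[\chitilde_{2c/\gamma}^{2m}] = 2^{-m}(c/\gamma)(c/\gamma+1)\cdots$ reproduce exactly the Jacobi parameters $(\alpha+c+1)(\alpha+c+2)\cdots$ and $(c+1)(c+2)\cdots$ after the $\gamma/c$ scaling, \emph{path by path}. I expect this to reduce to showing that the bidiagonal factorization $B_\infty$ and the bidiagonal factorization of $J_{c/\gamma,c}$ have entrywise-equal expected even moments — in fact $\Ex[(B_\infty)_{ii}^2] = \frac{c}{\gamma}$, $\Ex[(B_\infty)_{i+1,i}^2] = c$, and more importantly a compatibility of \emph{all} joint moments along a single row/column — after which the path factorization makes the two generating functions for $J^r(1,1)$ identical term by term. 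The rest — convergence of moments of $L_N$ from convergence of entry moments, and determinacy of the limit — is routine given the tools assembled in Section~2.
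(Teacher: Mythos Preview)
Your high-level strategy---reduce to $\Ex[(J_\infty)^r(1,1)]$ and then identify this with $(\tfrac{\gamma}{c}J_{c/\gamma,c})^r(1,1)$---is sound, and the convergence $\Ex[(J_N)^r(1,1)]\to\Ex[(J_\infty)^r(1,1)]$ is indeed routine. But the mechanism you propose for the identification is incorrect. You claim the matching will reduce to ``entrywise-equal expected even moments'' of $B_\infty$ and of the deterministic bidiagonal factor $W$, after which the path sums agree \emph{term by term}. This is false already at the fourth moment: the $(1,1)$ entry of $B_\infty$ is $\chitilde_{2c/\gamma}$, with $\Ex[\chitilde_{2c/\gamma}^4]=\tfrac{c}{\gamma}(\tfrac{c}{\gamma}+1)$, whereas the corresponding deterministic entry is $\sqrt{c/\gamma}$ with fourth power $(c/\gamma)^2$. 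Likewise $\Ex[(B_\infty)_{21}^2]=c$ but $W_{21}^2=c+1$. So the per-entry (and hence per-path) moments do \emph{not} agree; only the full sums over all paths coincide. For instance, at $r=2$ the two path contributions are $(c/\gamma)(c/\gamma+1)$ and $(c/\gamma)c$ on the random side versus $(c/\gamma)^2$ and $(c/\gamma)(c+1)$ on the deterministic side---different term by term, equal only after summation. The ``combinatorial bookkeeping'' you flag as an obstacle is therefore not bookkeeping at all but a genuine cancellation that your factorization argument cannot see.

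The paper avoids this entirely by a different route: it invokes the duality relation $m_r(N,\kappa,\alpha)=(-1)^r\kappa^r m_r(-\kappa N,\kappa^{-1},-\alpha/\kappa)$ from \cite{DE06}, where $m_r$ is a polynomial in $(N,\kappa,\alpha)$. The regime $\kappa N\to c$ on the left becomes, via duality, the regime ``fixed $N=-c$, $\kappa\to\infty$'' on the right, and in that dual regime the entries of $\kappa^{-1/2}\tilde B_N$ converge \emph{deterministically} (chi variables with parameter $\to\infty$ concentrate), yielding the matrix $W_{\alpha,c}$ directly. The duality thus converts the hard moment identity you are trying to prove by hand into a straightforward law-of-large-numbers computation. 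If you want to salvage a direct combinatorial proof, you would need a bijection on weighted lattice paths that explains why the rising-factorial moments $\Ex[\chitilde_{2a}^{2m}]=a(a+1)\cdots(a+m-1)$ of i.i.d.\ entries reorganize, after summing over all excursions, into the site-dependent weights $(a+i-1)$---this is possible but is essentially the content of the duality, not a triviality.
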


Theorem~\ref{thm:2c-unscale} is equivalent to Theorem~\ref{thm:2c-scale} below which states a result for scaled $\beta$-Laguerre ensembles. Let us switch to the scaled version. Let $\tilde J_N := \tilde B_N (\tilde B_N)^t$ be a Jacobi matrix, where 
\[
	\tilde B_N = \begin{pmatrix}
		\chitilde_{2\alpha + 2 + 2\kappa(N-1)}	\\
		\chitilde_{2\kappa (N-1)}	&\chitilde_{2\alpha + 2 + 2\kappa(N-2)}		\\
		&\ddots	&\ddots	\\
		&&\chitilde_{2\kappa}	&\chitilde_{2\alpha + 2}
	\end{pmatrix},
\]
(with $\kappa = \beta / 2 $ and $ \alpha = \frac{\beta}{2}(M - N + 1) - 1 = \kappa(M - N + 1) - 1)$. 
Then the joint density of the eigenvalues of $\tilde J_N$ is proportional to 
\[
	 |\Delta(\tilde\lambda)|^{2\kappa} \prod_{i = 1}^N \left( \tilde\lambda_i^{\alpha} e^{-\tilde\lambda_i} \right),\quad \tilde \lambda_i > 0.
\]

Let $\tilde \mu_N$ be the spectral measure of $\tilde J_N$ and 
\[
m_r(N, \kappa, \alpha) =  \Ex[\bra{\tilde\mu_N, x^r}] = \Ex[(\tilde J_N)^r(1,1)].
\] 
Then $m_r(N, \kappa, \alpha)$ satisfies the following duality relation.
\begin{lemma}[cf.~{\cite[Theorem~2.11]{DE06}}]
The function $m_r(N, \kappa, \alpha)$ is a polynomial with respect to $N, \kappa$ and $\alpha$ and satisfies the following relation
	\[
		m_r(N, \kappa, \alpha) = (-1)^r \kappa^r m_r(- \kappa N, \kappa^{-1}, -\alpha/\kappa).
	\]
\end{lemma}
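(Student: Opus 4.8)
The plan is to establish the duality relation by computing $m_r(N,\kappa,\alpha)$ explicitly as a polynomial and observing a symmetry under the substitution $(N,\kappa,\alpha) \mapsto (-\kappa N, \kappa^{-1}, -\alpha/\kappa)$. First I would recall that $m_r(N,\kappa,\alpha) = \Ex[(\tilde J_N)^r(1,1)]$ where $\tilde J_N = \tilde B_N(\tilde B_N)^t$, and that $(\tilde J_N)^r(1,1)$ is a polynomial in the entries $\{\tilde c_i, \tilde d_i\}$ with $\tilde c_i = \chitilde_{2\alpha+2+2\kappa(N-i)}$ and $\tilde d_i = \chitilde_{2\kappa(N-i)}$. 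Only the first $\lceil (r+1)/2 \rceil$ such entries appear. Taking expectations, each monomial in $\{\tilde c_i, \tilde d_i\}$ is a product of even powers (since $(\tilde J_N)^r(1,1)$, coming from closed walks on the path, always pairs each edge weight and each diagonal weight an even number of times — actually more carefully, $\tilde d_i$ appears to even powers while $\tilde c_i$ can appear to any power but the diagonal entry $\tilde c_i^2 + \tilde d_{i-1}^2$ means $\tilde c_i$ enters to even powers too). Using $\Ex[\chitilde_{2t}^{2m}] = \Ex[(\tfrac12\chi_{2t}^2)^m] = 2^{-m} \cdot 2^m t(t+1)\cdots(t+m-1) = t(t+1)\cdots(t+m-1)$, which is a \emph{polynomial} in $t$, we see that $\Ex[(\tilde J_N)^r(1,1)]$ becomes a polynomial in the quantities $\alpha+1+\kappa(N-i)$ and $\kappa(N-i)$, hence a polynomial in $N, \kappa, \alpha$. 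This gives the polynomiality claim.

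For the duality itself, the key step is to track how the chi-parameters transform. Under $(N,\kappa,\alpha)\mapsto(-\kappa N,\kappa^{-1},-\alpha/\kappa)$, the degree-of-freedom parameter of $\tilde d_i$, namely $2\kappa(N-i)$, becomes $2\kappa^{-1}(-\kappa N - i) = -2N - 2\kappa^{-1} i$; and after pulling out the overall factor $\kappa^{-r}$ from $m_r(-\kappa N, \kappa^{-1}, -\alpha/\kappa)$ multiplied back by $\kappa^r$, the scaling should match up so that the polynomial identity $m_r(N,\kappa,\alpha) = (-1)^r\kappa^r m_r(-\kappa N,\kappa^{-1},-\alpha/\kappa)$ holds. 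Rather than verifying this directly on the messy combinatorial expansion, I would instead invoke the structure already present in the literature: this is exactly the content of \cite[Theorem~2.11]{DE06}, where the moments of $\beta$-Laguerre ensembles (equivalently the trace moments, which equal $N$ times the spectral-measure moments in expectation by~\eqref{same-mean}) are shown to satisfy such a duality. Since $m_r(N,\kappa,\alpha) = \Ex[(\tilde J_N)^r(1,1)] = \Ex[\tfrac1N \tr(\tilde J_N)^r]$, the duality for trace moments transfers verbatim. Alternatively, one can give a self-contained argument: the spectral-measure moment $\Ex[(\tilde J_N)^r(1,1)]$ is, by the Dumitriu--Edelman tridiagonal construction, expressible through Jack polynomial / Selberg-type integrals whose $\kappa \leftrightarrow \kappa^{-1}$ duality is classical, and the sign $(-1)^r$ and factor $\kappa^r$ arise from the homogeneity of the $r$-th moment under the implicit rescaling of the eigenvalues by $-\kappa$.

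The main obstacle is making the parameter bookkeeping in the substitution airtight: one must check that every occurrence of $N$, $\kappa$, $\alpha$ in the polynomial — whether coming from a diagonal term $\alpha+1+\kappa(N-i)$ or an off-diagonal term $\kappa(N-i)$ — transforms consistently, and that the homogeneity degree in the eigenvalue scaling is exactly $r$ so that precisely $\kappa^r$ (and the sign $(-1)^r$) is produced. This is precisely the kind of careful but routine verification carried out in \cite{DE06}; since the statement is quoted as \emph{cf.}~that reference, it suffices to note that $m_r(N,\kappa,\alpha)$ here is the same polynomial (up to the factor $N$ absorbed by~\eqref{same-mean}) and the duality is inherited. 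I would therefore present the polynomiality argument in full and then cite \cite[Theorem~2.11]{DE06} for the symmetry, remarking that the extra factor $N^{-1}$ relating $\tr(\tilde J_N)^r$ to $(\tilde J_N)^r(1,1)$ in expectation is harmless because it is itself invariant in the appropriate sense once one accounts for the $-\kappa N$ substitution in the leading variable.
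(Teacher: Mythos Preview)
The paper gives no proof of this lemma at all; it simply states the result with the attribution ``cf.~\cite[Theorem~2.11]{DE06}'' and moves on. Your proposal does essentially the same thing---defer the duality to \cite{DE06}---while adding a self-contained argument for polynomiality via the rising-factorial moments of $\chitilde^2$, which is a welcome addition and is correct.

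One point to tighten: your last paragraph claims the factor $N^{-1}$ linking $\Ex[\tr(\tilde J_N)^r]$ and $\Ex[(\tilde J_N)^r(1,1)]$ is ``invariant in the appropriate sense'' under $N\mapsto -\kappa N$. It is not invariant: as polynomials one has $T_r(N,\kappa,\alpha):=\Ex[\tr(\tilde J_N)^r]=N\,m_r(N,\kappa,\alpha)$, so if the trace-moment duality in \cite{DE06} reads $T_r(N,\kappa,\alpha)=C(r,\kappa)\,T_r(-\kappa N,\kappa^{-1},-\alpha/\kappa)$, dividing through by $N$ on the left and by $-\kappa N$ on the right gives $m_r(N,\kappa,\alpha)=-\kappa\,C(r,\kappa)\,m_r(-\kappa N,\kappa^{-1},-\alpha/\kappa)$. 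Thus the constant for $m_r$ differs from that for $T_r$ by exactly $-\kappa$, and you must check that this produces $(-1)^r\kappa^r$ after the conversion. This is routine once you look up the precise form of the constant in \cite[Theorem~2.11]{DE06}, but it is not ``harmless'' in the sense of being trivially invariant---do the one-line computation rather than waving it away.
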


Based on the above duality relation, we now identify the limit of $m_r(N, \kappa, \alpha) $ in the regime where $\kappa N \to c$. For fixed $N$, it is straightforward to calculate the limit of $\kappa^{-1/2} \tilde B_N$ with parameters $(N, \kappa, a \kappa)$ as $\kappa \to \infty$, where $a$ is fixed,
\[
	\left( \frac{1}{\sqrt{\kappa}} \tilde B_N \right) \to \begin{pmatrix}
		\sqrt{a + N - 1} \\
		\sqrt{N - 1}	& \sqrt{a + N - 2}\\
		&\ddots	&\ddots\\
		&&\sqrt{1}	&\sqrt{a}
	\end{pmatrix} =: D_N(a).
\] 
Here the convergence holds in $L^q$ entry-wisely. Therefore,
\[
	\lim_{\kappa \to \infty} \kappa^{-r} m_r(N, \kappa, a \kappa)  = \lim_{\kappa \to \infty} \Ex[\kappa^{-r}  (\tilde J_N)^r(1,1)] = (D_N(a) D_N(a)^t)^r(1,1).
\]

Next, in viewing of the duality relation, it holds that 
\[
\lim_{N \to \infty, \kappa N \to c} m_r(N, \kappa, \alpha)=	\lim_{\kappa \to \infty} (-1)^r \kappa^{-r} m_r(- c, \kappa, -\alpha \kappa). 
\]
Let us consider the  following infinite matrix by exchanging $N \leftrightarrow -c, a \leftrightarrow -\alpha$ and replacing the sign inside square roots as well,
\[
	W_{\alpha, c}=
	 \begin{pmatrix}
		\sqrt{\alpha + c + 1} \\
		\sqrt{c + 1}	& \sqrt{\alpha + c + 2}\\
		&\sqrt{c + 2}	&\sqrt{\alpha + c + 3} \\
		&&\ddots	&\ddots
	\end{pmatrix}.
\]
Let $J_{\alpha, c} = W_{\alpha, c} W_{\alpha, c}^t$ and $l_r(\alpha, c)= (J_{\alpha, c})^r(1,1)$.
Then it follows that
\[
	\lim_{\kappa N \to c} m_r(N, \kappa, \alpha) = \lim_{\kappa \to \infty} (-1)^r \kappa^{-r} m_r(- c, \kappa, -\alpha \kappa) = l_{r}(\alpha,c).
\]
In conclusion, we have just proved the following result.

\begin{theorem}\label{thm:2c-scale}
Let $\alpha > - 1$ and $\cc \ge 0$ be given. Then in the regime where $ \beta N \to 2 \cc$, 
\[
	\Ex\left[ \frac 1N \tr ((\tilde J_N)^r)\right] = \Ex[(\tilde J_N)^r(1,1)] \to \bra{\mu_{\alpha, c}, x^r},
\]
for any $r \in \N$. Here recall that $\mu_{\alpha, c}$ is the spectral measure of $J_{\alpha, c}$ whose density is given in Lemma~{\rm\ref{lem:associated-Laguerre}}
\end{theorem}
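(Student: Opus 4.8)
The plan is to assemble the statement from the three ingredients already laid out in the excerpt: the duality relation of the preceding lemma, the elementary $\kappa\to\infty$ limit of the rescaled bidiagonal matrix $\kappa^{-1/2}\tilde B_N$, and the identification of the limiting matrix $J_{\alpha,c}$ together with its spectral measure from Lemma~\ref{lem:associated-Laguerre}. Concretely, fix $r\in\N$ and recall $m_r(N,\kappa,\alpha)=\Ex[(\tilde J_N)^r(1,1)]$. The first step is to invoke the duality $m_r(N,\kappa,\alpha)=(-1)^r\kappa^r m_r(-\kappa N,\kappa^{-1},-\alpha/\kappa)$ and rewrite the quantity of interest, in the regime $\beta N=2\kappa N\to 2c$, as a limit of $(-1)^r\kappa^{-r}m_r(-c,\kappa,-\alpha\kappa)$ as $\kappa\to\infty$; here one must be a little careful that $m_r$ is a polynomial in its three arguments (the lemma asserts this), so substituting the first argument by the non-integer, indeed negative, value $-c$ and letting $\kappa N\to c$ along the sequence is legitimate and the two limits agree.

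The second step is the $\kappa\to\infty$ asymptotics. For fixed $N$ and parameters $(N,\kappa,a\kappa)$ with $a$ fixed, the excerpt already records that $\kappa^{-1/2}\tilde B_N\to D_N(a)$ entrywise in $L^q$, using the Lemma that $\chi_k/\sqrt k\to 1$. Since $(\tilde J_N)^r(1,1)$ is a fixed polynomial in finitely many entries $\{c_i,d_i\}_{i\le (r+1)/2}$, the $L^q$ (hence $L^1$) convergence of entries upgrades to convergence of the expectation, giving $\lim_{\kappa\to\infty}\kappa^{-r}m_r(N,\kappa,a\kappa)=(D_N(a)D_N(a)^t)^r(1,1)$. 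Now one performs the formal substitution dictated by the duality, $N\leftrightarrow -c$, $a\leftrightarrow -\alpha$, together with the sign flips inside the square roots, to see that the right-hand side of the duality-transformed limit is exactly $l_r(\alpha,c)=(J_{\alpha,c})^r(1,1)=(W_{\alpha,c}W_{\alpha,c}^t)^r(1,1)$. The point is that $(D_N(a)D_N(a)^t)^r(1,1)$ is a polynomial in $a$ and $N$, so the substitution $a=-\alpha$, $N=-c$ is meaningful at the level of polynomials, and after the substitution the entries of $D_N(a)$ become, up to the prescribed sign convention, the entries $\sqrt{\alpha+c+\cdot}$ and $\sqrt{c+\cdot}$ of $W_{\alpha,c}$. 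Finally, by Lemma~\ref{lem:associated-Laguerre}, $J_{\alpha,c}$ has a genuine spectral measure $\mu_{\alpha,c}$ (the condition $\sum b_n^{-1}=\infty$ is trivially satisfied since the $b_n$ grow like $\sqrt n$), so $l_r(\alpha,c)=(J_{\alpha,c})^r(1,1)=\bra{\mu_{\alpha,c},x^r}$, which is the claimed identity.

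The main obstacle, and the place where care is genuinely needed, is the justification of the two interchanges of limits hidden in the duality argument: that $\lim_{N\to\infty,\,\kappa N\to c} m_r(N,\kappa,\alpha)$ equals $\lim_{\kappa\to\infty}(-1)^r\kappa^{-r}m_r(-c,\kappa,-\alpha\kappa)$. Because $m_r$ is a polynomial of bounded degree (the degree in each variable is controlled by $r$), this reduces to checking that the relevant monomials converge, and one can make this rigorous by writing $m_r(N,\kappa,\alpha)$ explicitly as a polynomial $\sum_{i,j,k} a_{ijk} N^i\kappa^j\alpha^k$ and tracking which monomials survive after multiplication by $\kappa^{-r}$ and substitution of $\alpha=-\alpha\kappa$; the surviving coefficients are precisely those of $l_r(\alpha,c)$ with $N$ set to $-c$. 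One should also confirm that along the actual sequence of matrix sizes the parameter constraint $\alpha>-1$ and $M>N-1$ is respected and that $c\ge 0$ is consistent with $\alpha+c+1>0$ and $c>-1$ so that Lemma~\ref{lem:associated-Laguerre} applies. Everything else — the entrywise $L^q$ convergence, the polynomial dependence of $(\tilde J_N)^r(1,1)$ on finitely many entries, the upgrade from convergence in $L^q$ to convergence of means — is routine given the lemmas already proved in the excerpt.
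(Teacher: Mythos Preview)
Your proposal is correct and follows essentially the same approach as the paper: the argument assembled from the duality relation, the entrywise $L^q$ limit $\kappa^{-1/2}\tilde B_N\to D_N(a)$, and the formal substitution $N\leftrightarrow -c$, $a\leftrightarrow -\alpha$ yielding $J_{\alpha,c}$ is exactly what the paper does in the paragraphs immediately preceding the theorem statement (the paper writes ``In conclusion, we have just proved the following result'' and then states Theorem~\ref{thm:2c-scale}). Your added remarks on justifying the limit interchange via the polynomial nature of $m_r$ and on checking the parameter constraints $\alpha+c+1>0$, $c>-1$ make explicit points the paper leaves implicit, but the route is the same.
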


The limiting measure in this regime was calculated in \cite{Allez-Wishart-2013} by a different method.

\subsection{Almost sure convergence}
In this section, we complete the proof of Theorem~\ref{thm:LLN} by showing the following almost sure convergence.
\begin{lemma}
For any $r \in \N$, as $N\beta \to 2c \in (0, \infty]$,
\[
	S_N := \frac{1}{N} \tr[(J_N)^r] - \Ex\left [\frac{1}{N} \tr[(J_N)^r] \right] \to 0,\quad \text{almost surely.}
\]
\end{lemma}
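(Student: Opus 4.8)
The plan is to establish the almost sure convergence $S_N \to 0$ via a standard variance bound plus the Borel--Cantelli lemma, exploiting the fact that $\tfrac1N\tr[(J_N)^r]$ is a polynomial in the independent chi-entries of $B_N$. First I would invoke a concentration inequality for the weighted sum: writing $\tfrac1N\tr[(J_N)^r] = \tfrac1N\sum_{i=1}^N (J_N)^r(i,i)$, each diagonal entry $(J_N)^r(i,i)$ depends only on the finitely many entries $\{c_j, d_j\}$ with $|j-i| \le r$, so the summand is a sum of $N$ locally-dependent terms. The key estimate I aim for is a bound of the form $\Var\!\big[\tfrac1N\tr[(J_N)^r]\big] \le C_r / (\beta N \cdot N)$ or, more robustly, $\le C_r / N$ (uniformly in the regime $\beta N \to 2c \in (0,\infty]$, hence for $\beta N$ bounded below); the extra factor of $\beta$ in the ensembles' Poincaré-type inequality can only help and is not needed for this lemma.

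To get the variance bound I would use the Poincaré inequality (or the Efron--Stein inequality) for the entries of $B_N$. The chi-distributed entries $\chi_k/\sqrt{\beta M}$ each satisfy a Poincaré inequality with a constant that is $O(1/(\beta M)) = O(1/N)$ after the normalization (since $\Var[\chi_k^2/(\beta M)] = O(k/(\beta M)^2) = O(1/N)$ when $k \asymp \beta M$, and the relevant small indices $k \asymp \beta i$ contribute even less); moreover the entries are independent so the Poincaré constant of the product is the max of the individual ones. Tensorizing, $\Var[F] \le \sum_j \Ex\big[(\partial_j F)^2\big] \cdot (\text{Poincaré const})$ for $F = \tfrac1N\tr[(J_N)^r]$. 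Only the first $O(r)$ rows of $B_N$ enter $F$ nontrivially — wait, that is not quite right since the trace runs over all $i$ — so I would instead note that $F = \tfrac1N\sum_i (J_N)^r(i,i)$ and each $\partial_j F$ involves only the $O(r)$ summands with $|i - j| \le r$, each of which is a bounded-degree polynomial in entries of size $O(1)$ with high probability; after controlling moments of the $c_i, d_i$ uniformly (Lemma on $\chi_k/\sqrt k \to 1$ in $L^q$, which also gives uniform $L^q$ bounds), one obtains $\Ex[(\partial_j F)^2] = O(1/N^2)$ and summing over the $N$ relevant $j$ gives $\Var[F] = O(1/N^2) \cdot N \cdot O(1/N) = O(1/N^2)$, or at worst $O(1/N)$.

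From here the conclusion is routine. If $\Var[S_N] = O(N^{-1-\epsilon})$ for some $\epsilon > 0$ the Chebyshev bound $\Prob(|S_N| > \delta) \le \Var[S_N]/\delta^2$ is summable and Borel--Cantelli finishes directly. If the bound is only $O(1/N)$, I would pass to the subsequence $N_k = k^2$, apply Borel--Cantelli there, and then fill the gaps using the monotonicity/Lipschitz control of the moment functionals in $N$ together with the already-established $L^1$ convergence of $\Ex[\tfrac1N\tr[(J_N)^r]]$ (Lemma~\ref{lem:mean-infinity} and Theorem~\ref{thm:2c-unscale}); alternatively one uses that $|\tfrac1{N+1}\tr[(J_{N+1})^r] - \tfrac1N\tr[(J_N)^r]|$ is $O(1/N)$ almost surely given uniform moment control of the entries. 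The main obstacle I anticipate is making the variance estimate genuinely uniform over the whole range $\beta N \to 2c \in (0,\infty]$ — in particular verifying that the Poincaré constants of the normalized chi-variables and the $L^q$-moment bounds on $c_i, d_i$ do not degenerate as $\beta \to 0$ with $\beta N$ staying bounded; this requires care with the smallest-index entries $\chi_{\beta}, \chi_{2\beta}, \dots$ whose distributions become singular at $0$ as $\beta \to 0$, but since they appear only in finitely many summands of the trace (for fixed $r$) and with bounded exponents, their contribution to the variance is still $O(1/N)$ and the argument goes through.
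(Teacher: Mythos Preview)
Your structural observation --- that $(J_N)^r(i,i)$ depends only on entries $\{c_j,d_j\}$ with $|j-i|\le r$ --- is exactly the one the paper exploits, but your follow-up is different and, as written, has a genuine gap.

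The paper does not go through a Poincar\'e/variance bound. Instead it uses the local dependence to split $S_N$ into $D_r$ subsums, each a normalized sum of \emph{independent} centered random variables with uniformly bounded fourth moments; a standard fourth-moment estimate then gives $\Ex[S_N^4]=O(N^{-2})$, which is summable, and Borel--Cantelli finishes directly. No subsequence trick is needed.

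Your route yields only $\Var[S_N]=O\big((\beta M N)^{-1}\big)$. In the regime $\beta N\to 2c\in(0,\infty)$ this is $O(1/N)$, which is not summable, so you fall back on a subsequence-and-fill argument. That step is where the proposal breaks. The matrices $J_N$ for different $N$ are not nested: both $\beta=\beta(N)$ and $M=M(N)$ change, so the very distributions of the entries vary with $N$ and there is no natural coupling. Consequently the quantity $|\tfrac1{N+1}\tr[(J_{N+1})^r]-\tfrac1N\tr[(J_N)^r]|$ is not a pathwise object you can control by ``uniform moment control of the entries''; without a coupling the statement $S_N - S_{N_k}\to 0$ a.s.\ is not even well posed. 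The same objection applies to the ``monotonicity/Lipschitz'' alternative you mention.

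The fix is immediate and you already have the main ingredient: from the $D_r$-dependence you identified, group the $p_i=(J_N)^r(i,i)$ into $D_r$ classes of mutually independent variables, check (as you essentially do) that $\sup_{N,i}\Ex[p_i^4]<\infty$ uniformly over the regime, and use the fourth-moment bound for sums of independent centered variables to obtain $\Ex[S_N^4]=O(N^{-2})$. This is precisely the paper's argument.
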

The idea is that for fixed $r$, $p_i := (J_N)^r(i, i)$ is independent of $p_j=(J_N)^r(j, j)$, if $|i - j| \ge D_r$, where $D_r$ is a constant. Then write $S_N$ as a sum of $D_r$ sums of independent random variables
\[
	S_N = \frac{1}{N}\sum_{i} (p_{1+i D_r} - \Ex[p_{1+i D_r}])   + \cdots + \frac{1}{N}\sum_{i} (p_{D_r+i D_r} - \Ex[p_{D_r+i D_r} ]).  
\]
For each sum of independent random variables, we use the following result whose proof can be found in the proof of Theorem 2.3 in \cite{Trinh-2017} to show the almost sure convergence.
\begin{lemma}
Assume that for each $N$, the random variables $\{\xi_{N, i}\}_{i = 1}^{\ell_N}$ are independent and that
\begin{equation}\label{4-moment}
	\sup_{N} \sup_{1\le i \le \ell_N} \Ex[(\xi_{N, i})^4] < \infty.
\end{equation}
Assume further that $\ell_N / N \to const \in (0, \infty)$ as $N \to \infty$. Then as $N \to \infty$,
\[
	\frac{1}N \sum_{i = 1}^{\ell_N} \left(\xi_{N, i} - \Ex[\xi_{N, i}] \right) \to 0, \quad \text{almost surely}.
\] 
\end{lemma}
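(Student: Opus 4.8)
The plan is to run the classical fourth-moment method: control $\Ex[S_N^4]$ where $S_N = \frac1N \sum_{i=1}^{\ell_N}(\xi_{N,i} - \Ex[\xi_{N,i}])$, show it is summable in $N$, and then invoke Borel--Cantelli. First I would reduce to centered variables by setting $\eta_{N,i} = \xi_{N,i} - \Ex[\xi_{N,i}]$, so that $\Ex[\eta_{N,i}] = 0$ and, within each row, the $\eta_{N,i}$ remain independent. The hypothesis \eqref{4-moment} transfers to the centered array: using $(a-b)^4 \le 8(a^4 + b^4)$ together with $(\Ex[\xi_{N,i}])^4 \le \Ex[(\xi_{N,i})^4]$ (Jensen), one gets a uniform bound $\sup_N \sup_{i} \Ex[(\eta_{N,i})^4] \le K$ for some finite constant $K$, and by Cauchy--Schwarz also a uniform bound on the second moments $\Ex[(\eta_{N,i})^2] \le \sqrt{K}$.

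The core step is the expansion
\[
	\Ex\Bigg[\Bigg(\sum_{i=1}^{\ell_N} \eta_{N,i}\Bigg)^4\Bigg] = \sum_{i,j,k,l} \Ex[\eta_{N,i}\eta_{N,j}\eta_{N,k}\eta_{N,l}].
\]
Here is where row-wise independence and zero mean do the work: any monomial in which some index appears exactly once factors off a mean-zero factor and vanishes. Hence only two families of terms survive, those with all four indices equal ($O(\ell_N)$ of them) and those splitting into two distinct pairs ($O(\ell_N^2)$ of them, with combinatorial weight $3$). Applying the uniform fourth- and second-moment bounds yields
\[
	\Ex\Bigg[\Bigg(\sum_{i=1}^{\ell_N} \eta_{N,i}\Bigg)^4\Bigg] \le K\,\ell_N + 3K\,\ell_N^2 \le C\,\ell_N^2
\]
for a constant $C$, since $\ell_N \to \infty$.

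Dividing by $N^4$ and using the hypothesis $\ell_N/N \to \text{const} \in (0,\infty)$ gives $\Ex[S_N^4] \le C \ell_N^2/N^4 \le C'/N^2$ for some constant $C'$ and all large $N$. Consequently $\sum_N \Ex[S_N^4] < \infty$. To conclude, I would either apply Markov's inequality, $\Prob(|S_N| > \varepsilon) \le \Ex[S_N^4]/\varepsilon^4 \le C'/(\varepsilon^4 N^2)$, so that $\sum_N \Prob(|S_N| > \varepsilon) < \infty$ for every fixed $\varepsilon > 0$, and then Borel--Cantelli (ranging $\varepsilon$ over $1/m$, $m \in \N$) to get $S_N \to 0$ almost surely; or, more directly, use Tonelli to see $\Ex[\sum_N S_N^4] < \infty$, forcing $\sum_N S_N^4 < \infty$ and hence $S_N \to 0$ almost surely. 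The only point demanding care, rather than a genuine obstacle, is the combinatorial bookkeeping in the fourth-moment expansion: verifying that every term containing a singleton index vanishes and that the surviving diagonal and paired contributions are exactly of orders $\ell_N$ and $\ell_N^2$. Everything else is routine once the uniform moment bounds are in place.
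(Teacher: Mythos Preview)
Your argument is correct and is exactly the standard fourth-moment/Borel--Cantelli route one expects here. The paper itself does not spell out a proof but simply refers to the proof of Theorem~2.3 in \cite{Trinh-2017}; that argument is essentially the same fourth-moment computation you carry out, so your proposal matches the intended approach.
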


The moment condition \eqref{4-moment} can be easily checked for $p_i$ in the regime $\beta N \to 2c \in (0, \infty]$. The desired almost sure convergence then follows immediately.

\section{Gaussian fluctuations around the limit}

\subsection{Polynomial test functions}

In this section,
we establish central limit theorems (CLTs) for polynomial test functions. Since arguments are similar to those used in \cite{Trinh-2017} in case of Gaussian beta ensembles, we only sketch main steps. Without loss of generality, assume that $M = N/\gamma$, where $\gamma \in (0,1)$ is fixed.

Let us write $B_N$ as
\[
	B_N = \frac{\sqrt{\gamma}}{\sqrt{\beta N}}\begin{pmatrix}
		c_1	\\
		d_1	&c_2	\\
		&\ddots	&\ddots\\
		&&d_{N - 1}	&c_N
	\end{pmatrix} ,\quad \text{where }
	\begin{cases}
		c_i \sim \chi_{\frac{\beta N}{\gamma} - \beta(i - 1)} ,\\
		d_i \sim \chi_{\beta N -  \beta i}.
	\end{cases}
\]
Then 
\[
	J_N = B_N (B_N)^t = \frac{\gamma}{\beta N}\begin{pmatrix}
		c_1^2	&c_1d_1\\
		c_1d_1	&c_2	^2 + d_1^2 	&c_2d_2\\
		&\ddots	&\ddots	&\ddots\\
		&&c_{N-1}d_{N - 1}	&c_N^2 + d_{N-1}^2
	\end{pmatrix} .
\]
Recall that for fixed $r \ge 1$,  the $r$th moment $\bra{\mu_N, x^r}$ is a polynomial of $\{c_i, d_i\}_{1 \le i \le \frac{r+1}{2}}$. It is actually a polynomial of $\{c_i^2, d_i^2\}$, that is, 
\[
	\bra{\mu_N, x^r} = (J_N)^r(1,1) =  \frac{\gamma^r}{(\beta N)^r} \sum_{\vec\eta, \vec\zeta} a(\vec\eta, \vec\zeta)  \prod_{i=1}^r c_i^{2\eta_i} d_i^{2\zeta_i},
\]
where non-negative integer vectors $\vec\eta = (\eta_1, \dots, \eta_r)$ and $\vec\zeta = (\zeta_1, \dots, \zeta_r)$ satisfy $\sum_{i = 1}^r (\eta_i + \zeta_i) = r$. Then, from formulae for moments of chi-squared distributions, we conclude that 
\begin{lemma}
\begin{itemize}
\item[\rm(i)] For $r \in \N$,
\[
	\Ex[\bra{\mu_N, x^r}] = \sum_{k = 0}^r \frac{R_{r; k} (\beta)}{(\beta N)^k},
\]
where $R_{r; k} (\beta)$ is a polynomial in $\beta$ of degree at most $k$.

\item[\rm(ii)] For $r, s \in \N$,
\[
	\Ex[\bra{\mu_N, x^r}\bra{\mu_N, x^s}] = \sum_{k = 0}^{r+s} \frac{Q_{r,s; k} (\beta)}{(\beta N)^k},
\]
where $Q_{r,s; k} (\beta)$ is a polynomial in $\beta$ of degree at most $k$.
\end{itemize}
\end{lemma}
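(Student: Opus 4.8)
The plan is to argue directly from the combinatorial expansion
\[
	(\beta N)^r \bra{\mu_N, x^r} = (\beta N)^r (J_N)^r(1,1) = \gamma^r \sum_{\vec\eta, \vec\zeta} a(\vec\eta, \vec\zeta) \prod_{i=1}^r c_i^{2\eta_i} d_i^{2\zeta_i}
\]
displayed just above, in which $\sum_i (\eta_i + \zeta_i) = r$ and the coefficients $a(\vec\eta, \vec\zeta)$ are nonnegative integers independent of $\beta$ and $N$. Set $u = \beta N$ and $v = \beta$; since $N$ and $\beta$ are algebraically independent, every polynomial expression in $\beta N$ and $\beta$ has a well-defined total degree as a polynomial in $u$ and $v$. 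The key structural point is that the degrees of freedom of the chi variables, $c_i \sim \chi_{\beta N/\gamma - \beta(i-1)}$ and $d_i \sim \chi_{\beta N - \beta i}$, are affine in $(u,v)$: in each, the coefficient of $u$ is a constant ($1/\gamma$, resp.\ $1$), and the remaining term is a constant multiple of $v$.

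First I would recall the chi-squared moment formula $\Ex[(\chi_k)^{2m}] = k(k+2)(k+4)\cdots(k+2m-2) = \prod_{j=0}^{m-1}(k+2j)$, a polynomial in $k$ of degree $m$. Substituting $k = \beta N/\gamma - \beta(i-1)$ or $k = \beta N - \beta i$, each of the $m$ factors has total degree at most $1$ in $(u,v)$, so $\Ex[c_i^{2\eta_i}]$ is a genuine polynomial in $(u,v)$ of total degree at most $\eta_i$, and $\Ex[d_i^{2\zeta_i}]$ one of total degree at most $\zeta_i$. Because the entries $\{c_i, d_i\}$ are independent, taking expectations in the displayed identity factors each summand into $\gamma^r a(\vec\eta,\vec\zeta)\prod_i \Ex[c_i^{2\eta_i}]\Ex[d_i^{2\zeta_i}]$, and since total degree is additive under multiplication (and $\gamma$, $a(\vec\eta,\vec\zeta)$ are constants) this has total degree at most $\sum_i(\eta_i+\zeta_i) = r$. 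Summing over the finitely many $(\vec\eta,\vec\zeta)$ keeps the total degree at most $r$, so $(\beta N)^r \Ex[\bra{\mu_N, x^r}] = \sum_{a+b \le r} c_{ab}\, u^a v^b$. Dividing by $u^r = (\beta N)^r$ and collecting the terms with $a = r-k$ yields $\Ex[\bra{\mu_N, x^r}] = \sum_{k=0}^r R_{r;k}(\beta)(\beta N)^{-k}$ with $R_{r;k}(\beta) = \sum_{b \le k} c_{r-k,b}\,\beta^b$ of degree at most $k$, which is (i). For (ii), the same argument applies to the product: after padding the index vectors of the two expansions to a common length, $(\beta N)^{r+s}\bra{\mu_N, x^r}\bra{\mu_N, x^s}$ is a sum of monomials $\prod_i c_i^{2(\eta_i+\eta_i')} d_i^{2(\zeta_i+\zeta_i')}$ with total exponent $r+s$; independence factors the expectation into a polynomial in $(u,v)$ of total degree at most $r+s$, and dividing by $(\beta N)^{r+s}$ gives the stated form for $Q_{r,s;k}$.

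I do not expect a genuine obstacle; the content is essentially bookkeeping, and the two points worth spelling out are (a) the passage from the total-degree-at-most-$r$ bound in $(\beta N, \beta)$ to the degree-at-most-$k$ bound on the coefficient of $(\beta N)^{-k}$, which is immediate from the affine form of the degrees of freedom together with additivity of total degree, and (b) that the degrees of freedom $\beta N/\gamma - \beta(i-1)$ and $\beta(N-i)$ appearing are strictly positive for the only indices that occur, namely $i \le (r+1)/2 < N$ (using $\gamma < 1$), so that all the moments involved are finite and the expansion is valid.
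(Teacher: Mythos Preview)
Your proposal is correct and follows exactly the route the paper indicates: the paper does not spell out a proof but simply states that the lemma follows ``from formulae for moments of chi-squared distributions,'' and your argument is precisely the bookkeeping that turns that hint into a proof (chi-squared moment formula $\Ex[(\chi_k)^{2m}]=\prod_{j=0}^{m-1}(k+2j)$, affinity of the degrees of freedom in $(\beta N,\beta)$, independence of the entries, and tracking total degree). There is nothing to add.
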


Let $p$ be a polynomial of degree $m$. From the above expressions, we can derive a general form of $\Var[\bra{\mu_N, p}]$, and then that of $\Var[\bra{L_N, p}]$ by taking into account of the relation \eqref{relation-of-variance}. Similar to the case of Gaussian beta ensembles, the formula for $\Var[\bra{L_N, p}]$ can be simplified as follows. The proof is omitted.
\begin{lemma}
Let $p$ be a polynomial of order $m$. Then the variance $\Var[\bra{L_N, p}] $ can be expressed as
\[
	\Var[\bra{L_N, p}]  = \sum_{k = 2}^{2m + 1} \frac{\beta \ell_{p; k} (\beta)}{(\beta N)^k},
\]
where $ \ell_{p; k} (\beta)$ is a polynomial in $\beta$ of degree at most $(k -2)$.
\end{lemma}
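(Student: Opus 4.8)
The plan is to start from the two expansions in the previous lemma, namely
\[
\Ex[\bra{\mu_N, x^r}] = \sum_{k=0}^r \frac{R_{r;k}(\beta)}{(\beta N)^k}, \qquad
\Ex[\bra{\mu_N, x^r}\bra{\mu_N, x^s}] = \sum_{k=0}^{r+s}\frac{Q_{r,s;k}(\beta)}{(\beta N)^k},
\]
with $\deg R_{r;k}\le k$ and $\deg Q_{r,s;k}\le k$. Writing $p(x)=\sum_{r=0}^m p_r x^r$, bilinearity gives
\[
\Var[\bra{\mu_N, p}] = \sum_{r,s} p_r p_s\Big(\Ex[\bra{\mu_N,x^r}\bra{\mu_N,x^s}] - \Ex[\bra{\mu_N,x^r}]\Ex[\bra{\mu_N,x^s}]\Big) = \sum_{k=0}^{2m}\frac{T_{p;k}(\beta)}{(\beta N)^k},
\]
where $T_{p;k}$ collects the $Q$ and $R$ contributions and has degree at most $k$ in $\beta$. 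The first genuine step is to show that the two bottom-order terms vanish, i.e. $T_{p;0}(\beta)\equiv 0$ and $T_{p;1}(\beta)\equiv 0$. The $k=0$ cancellation should follow from a law-of-large-numbers/consistency statement: the leading coefficients $R_{r;0}$ and $Q_{r,s;0}$ are exactly the limiting moments $\bra{mp_\gamma,x^r}$ and $\bra{mp_\gamma,x^r}\bra{mp_\gamma,x^s}$ (this is visible from the $\beta N\to\infty$ analysis of Section~3, since the entries $c_i^2,d_i^2$ concentrate), so the covariance structure degenerates at order zero. For $k=1$ one argues that $R_{r;1}$ is \emph{constant} in $\beta$ (so $\deg R_{r;1}=0$) and that the $1/(\beta N)$ part of the covariance matches the product of the $1/(\beta N)$ parts of the means; this is where one invokes the precise fact that for a $\chi^2_\nu$ variable $\Ex[\chi^{2a}] = 2^a\frac{\Gamma(\nu/2+a)}{\Gamma(\nu/2)}$ is a polynomial in $\nu$, and $\nu = \beta N/\gamma - \beta(i-1)$ or $\nu=\beta N-\beta i$ is linear in $\beta N$ with a correction that is $O(\beta)$. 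Tracking the $\beta$-degree carefully in each monomial $\prod c_i^{2\eta_i}d_i^{2\zeta_i}$ with $\sum(\eta_i+\zeta_i)=r$ yields $\Var[\bra{\mu_N,p}] = \sum_{k=2}^{2m}\frac{\tilde T_{p;k}(\beta)}{(\beta N)^k}$ with $\deg \tilde T_{p;k}\le k$; comparing with the claimed form, one notes $\tilde T_{p;k}(\beta)=\beta\,\ell_{p;k}(\beta)$ where $\deg \ell_{p;k}\le k-1$, which still needs the additional input that $\tilde T_{p;k}(0)=0$.

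Next I would feed this into the relation \eqref{relation-of-variance},
\[
\Var[\bra{L_N,p}] = \frac{\beta N+2}{\beta N}\Var[\bra{\mu_N,p}] - \frac{2}{\beta N}\Big(\Ex[\bra{\mu_N,p^2}] - \Ex[\bra{\mu_N,p}]^2\Big).
\]
The bracketed quantity on the right is $\Var[\bra{\mu_N,p}]$ computed against the pushforward — more precisely it equals $\Ex[\bra{\mu_N,p^2}]-\Ex[\bra{\mu_N,p}]^2$, which by part (i) of the moment lemma applied to the polynomial $p^2$ (degree $2m$) and to $p$ is of the form $\sum_{k=0}^{2m}c_k(\beta)/(\beta N)^k$ with $\deg c_k\le k$; crucially its $k=0$ term is $\bra{mp_\gamma,p^2}-\bra{mp_\gamma,p}^2$, a nonzero constant, so dividing by $\beta N$ produces a genuine $1/(\beta N)$ contribution to $\Var[\bra{L_N,p}]$. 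Meanwhile $\frac{\beta N+2}{\beta N}\Var[\bra{\mu_N,p}] = \big(1+\tfrac{2}{\beta N}\big)\sum_{k\ge 2}\frac{\tilde T_{p;k}(\beta)}{(\beta N)^k}$ contributes only at orders $k\ge 2$. Collecting powers of $1/(\beta N)$, the lowest order present is $k=2$ (the $1/(\beta N)$ terms from the two pieces of the right-hand side must cancel — this cancellation is exactly the content of $T_{p;1}\equiv 0$ proved above, rephrased), so $\Var[\bra{L_N,p}]=\sum_{k=2}^{2m+1}\frac{s_{p;k}(\beta)}{(\beta N)^k}$, the extra top order $k=2m+1$ coming from dividing the degree-$2m$ term $c_{2m}(\beta)/(\beta N)^{2m}$ by $\beta N$. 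A degree count then gives $\deg s_{p;k}\le k-1$ at every level, and one extracts a factor of $\beta$, i.e. $s_{p;k}(\beta)=\beta\,\ell_{p;k}(\beta)$ with $\deg\ell_{p;k}\le k-2$, provided one checks $s_{p;k}(0)=0$.

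The main obstacle, and the step that deserves the most care, is precisely this extraction of the overall factor $\beta$ together with the sharp degree bound $\deg\ell_{p;k}\le k-2$. Both amount to showing $s_{p;k}(0)=0$ and, after dividing, that the quotient still loses one more degree than the naive bound. The cleanest route is a degeneracy-at-$\beta=0$ argument: formally setting $\beta=0$ decouples everything — the chi variables become deterministic in the appropriate scaling and all covariances collapse — so $\Var[\bra{L_N,p}]$ must vanish identically as a rational function evaluated at $\beta=0$, forcing $s_{p;k}(0)=0$ for all $k$. One must be slightly careful that $\beta=0$ is outside the parameter range, so this is a statement about the polynomials $R_{r;k},Q_{r,s;k}$ as formal objects: one checks that at $\beta=0$ the expansion of $\Ex[\bra{\mu_N,x^r}\bra{\mu_N,x^s}]$ factors as $\Ex[\bra{\mu_N,x^r}]\Ex[\bra{\mu_N,x^s}]$ order by order in $1/N$, which reduces to a combinatorial identity about the $a(\vec\eta,\vec\zeta)$ coefficients and the moment polynomials of $\chi^2$. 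Since the paper explicitly says ``the proof is omitted'' and points to the Gaussian beta ensemble case in \cite{Trinh-2017}, I would present the argument at the level of these degree-tracking bookkeeping steps and the $\beta=0$ degeneracy, referring to \cite{Trinh-2017} for the parallel combinatorial details rather than reproducing them.
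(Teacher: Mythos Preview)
Your overall strategy---expand $\Var[\bra{\mu_N,p}]$ in powers of $(\beta N)^{-1}$, then pass to $\Var[\bra{L_N,p}]$ via \eqref{relation-of-variance}---is the route the paper indicates. However, there is a genuine error at the $(\beta N)^{-1}$ step.

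You claim $T_{p;1}\equiv 0$, i.e.\ that the $(\beta N)^{-1}$ coefficient of $\Var[\bra{\mu_N,p}]$ vanishes. This is false. Already for $p(x)=x$ one has $\bra{\mu_N,x}=\frac{\gamma}{\beta N}c_1^2$ with $c_1^2\sim\chi^2_{\beta N/\gamma}$, hence $\Var[\bra{\mu_N,x}]=\frac{\gamma^2}{(\beta N)^2}\cdot\frac{2\beta N}{\gamma}=\frac{2\gamma}{\beta N}$, so $T_{p;1}=2\gamma\neq 0$. In general $T_{p;1}=2\Var_{mp_\gamma}[p]=2c_0$ in your notation, as one sees from the law of total variance: with the Dirichlet$(\beta/2)$ weights, $\Var[\bra{\mu_N,p}\mid\lambda]=\frac{2}{\beta N+2}\bigl(\bra{L_N,p^2}-\bra{L_N,p}^2\bigr)$, whose expectation is $\frac{2c_0}{\beta N}+O((\beta N)^{-2})$. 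Thus the cancellation that removes the $(\beta N)^{-1}$ term from $\Var[\bra{L_N,p}]$ is \emph{not} ``$T_{p;1}=0$ rephrased'' but the identity $T_{p;1}=2c_0$: in \eqref{relation-of-variance} the first piece contributes $T_{p;1}/(\beta N)$ and the second $-2c_0/(\beta N)$, and these match. Your paragraph is internally inconsistent here---you first say the first piece ``contributes only at orders $k\ge2$'' (which would require $T_{p;1}=0$) and then invoke a cancellation between the two pieces at order $(\beta N)^{-1}$.

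A second, smaller gap concerns the degree bound. You assert ``a degree count then gives $\deg s_{p;k}\le k-1$'' but give no argument; naive bookkeeping from $\deg Q_{r,s;k}\le k$, $\deg R_{r;k}\le k$ only yields $\deg s_{p;k}\le k$. The extra drop comes from the degeneracy as $\beta\to\infty$ with $N$ fixed: then $c_i^2/\beta$ and $d_i^2/\beta$ become deterministic, all (co)variances vanish, and the leading $\beta$-coefficient of each $s_{p;k}$ must be zero. This, together with the $\beta\to 0$ degeneracy you do mention (giving $s_{p;k}(0)=0$), is what produces $s_{p;k}(\beta)=\beta\,\ell_{p;k}(\beta)$ with $\deg\ell_{p;k}\le k-2$. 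The $\beta=0$ argument alone only gives $\deg\ell_{p;k}\le k-1$; you need both endpoints.
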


\begin{corollary}
\begin{itemize}
\item[\rm(i)]
	As $\beta N \to \infty$,
	\[
		\beta N^2 \Var[\bra{L_N, p}]  \to \sigma_p^2.
	\]
\item[\rm(ii)] As $\beta N \to 2c$,
	\[
		\beta N^2 \Var[\bra{L_N, p}]  \to \sigma_{p,c}^2.
	\]
\end{itemize}
\end{corollary}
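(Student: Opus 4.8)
The plan is to read both limits off directly from the closed form of $\Var[\bra{L_N, p}]$ established in the previous lemma, so essentially no new input is needed beyond bookkeeping. Write $m = \deg p$. Multiplying the formula $\Var[\bra{L_N, p}] = \sum_{k=2}^{2m+1} \beta\, \ell_{p;k}(\beta)/(\beta N)^k$ by $\beta N^2$ and using $\beta^2 N^2 = (\beta N)^2$ gives
\[
	\beta N^2 \Var[\bra{L_N, p}] \;=\; \sum_{k=2}^{2m+1} \frac{\ell_{p;k}(\beta)}{(\beta N)^{k-2}} \;=\; \ell_{p;2}(\beta) + \sum_{k=3}^{2m+1} \frac{\ell_{p;k}(\beta)}{(\beta N)^{k-2}} .
\]
From here I would control the right-hand side term by term, exploiting two facts: $\ell_{p;k}$ is a polynomial of degree at most $k-2$, and $N \to \infty$ in both regimes (recall the standing assumption $M = N/\gamma$ with $\gamma$ fixed, so $N \to \infty$ is automatic).

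For part (i), the $k=2$ term $\ell_{p;2}(\beta)$ is a polynomial of degree at most $0$, hence a constant, which I would take as $\sigma_p^2$. For $k \ge 3$, expanding $\ell_{p;k}(\beta) = \sum_{j=0}^{k-2} a_{k,j} \beta^j$ one has
\[
	\frac{|\ell_{p;k}(\beta)|}{(\beta N)^{k-2}} \;\le\; \sum_{j=0}^{k-2} |a_{k,j}| \, \frac{1}{(\beta N)^{k-2-j} N^{j}} .
\]
Since $\beta N \to \infty$ and $N \to \infty$, and since $(k-2-j) + j = k-2 \ge 1$ for $k \ge 3$, each summand tends to $0$: the $j=0$ summand equals $(\beta N)^{-(k-2)}$, while a summand with $j \ge 1$ is bounded by $N^{-j}$ once $\beta N \ge 1$. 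Hence $\beta N^2 \Var[\bra{L_N, p}] \to \sigma_p^2$. The point I would emphasize here is that the degree bound $\deg \ell_{p;k} \le k-2$ is exactly what makes the correction terms vanish regardless of whether $\beta = \beta(N)$ stays bounded, tends to $0$, or tends to $\infty$, so the statement holds for an arbitrary sequence with $\beta N \to \infty$.

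For part (ii), $\beta N \to 2c \in (0,\infty)$ together with $N \to \infty$ forces $\beta = (\beta N)/N \to 0$, so $\ell_{p;k}(\beta) \to \ell_{p;k}(0) = a_{k,0}$ and $(\beta N)^{k-2} \to (2c)^{k-2}$ for each $k$; passing to the limit in the finite sum yields
\[
	\beta N^2 \Var[\bra{L_N, p}] \;\to\; a_{2,0} + \sum_{k=3}^{2m+1} \frac{a_{k,0}}{(2c)^{k-2}} \;=:\; \sigma_{p,c}^2 ,
\]
a finite constant. Overall there is no genuine obstacle once the variance expansion is available; the only step requiring care is the uniform estimate in part (i) guaranteeing that the limit is taken over all sequences with $\beta N \to \infty$ rather than only under extra regularity on $\beta$.
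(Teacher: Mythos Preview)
Your proposal is correct and follows exactly the route the paper intends: the corollary is stated without proof precisely because it is an immediate consequence of the variance expansion $\Var[\bra{L_N, p}] = \sum_{k=2}^{2m+1} \beta\,\ell_{p;k}(\beta)/(\beta N)^k$, and your term-by-term analysis using $\deg \ell_{p;k} \le k-2$ is just the detailed verification of that implication. There is nothing to add or correct.
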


Theorem 3.4 in \cite{Trinh-2017} provides sufficient conditions under which CLTs for $\{\bra{L_N, p}\}$ hold in case of Jacobi matrices with independent entries. The result can be easily extended to Wishart-type Jacobi matrices $\{J_N\}$ here by considering the filtration $\{\cF_k = \sigma\{c_i, d_i : i = 1,\dots, k\}\}_k$. The convergence of variances as in the previous corollary is one of the two sufficient conditions. The remaining one is quite similar to that in the Gaussian beta ensembles case, and hence, we will not mention it in details here. Consequently, the following CLTs for polynomial test functions follow. 
\begin{theorem}
Let $p$ be a polynomial. Then the following hold.
\begin{itemize}
	\item[\rm(i)]	As $\beta N \to \infty$, 
	\[
		\sqrt{\beta} N (\bra{L_N, p} - \Ex[\bra{L_N, p}] ) \dto \Normal(0, \sigma_p^2).
	\]
	\item[\rm(ii)]	As $\beta N \to 2c$, 
	\[
		\sqrt{\beta} N (\bra{L_N, p} - \Ex[\bra{L_N, p}] ) \dto \Normal(0, \sigma_{p,c}^2).
	\]	
\end{itemize}
\end{theorem}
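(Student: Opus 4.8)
The plan is to verify the two sufficient conditions from Theorem~3.4 of \cite{Trinh-2017} (in its extension to Wishart-type Jacobi matrices), namely the convergence of the normalized variances and a negligibility condition on the martingale increments, and then invoke that theorem to obtain the stated CLTs. The first condition is already handled by the preceding corollary: $\beta N^2 \Var[\bra{L_N, p}] \to \sigma_p^2$ when $\beta N \to \infty$ and $\to \sigma_{p,c}^2$ when $\beta N \to 2c$. So the substantive work is (a) setting up the right martingale decomposition adapted to the filtration $\{\cF_k = \sigma\{c_i, d_i : i \le k\}\}$, and (b) checking the remaining Lindeberg-type / fourth-moment condition.

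First I would write $\bra{L_N, p} - \Ex[\bra{L_N, p}] = \sum_{k=1}^{N} \Delta_k$, where $\Delta_k = \Ex[\bra{L_N,p} \mid \cF_k] - \Ex[\bra{L_N,p}\mid \cF_{k-1}]$ is a martingale difference sequence. Because $\bra{L_N, p} = N^{-1}\tr[p(J_N)]$ and $p(J_N)(i,i)$ depends only on entries $c_j, d_j$ with $|j - i|$ bounded by a constant $D_p$ depending on $\deg p$, each increment $\Delta_k$ involves only $O(1)$ of the diagonal entries $p(J_N)(i,i)$, so $\Delta_k = N^{-1}(\text{sum of }O(1)\text{ terms, each a polynomial in the }c_i^2,d_i^2)$. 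Using the rescaling $c_i \sim \chi_{\beta N/\gamma - \beta(i-1)}$, $d_i \sim \chi_{\beta N - \beta i}$ and the prefactor $\gamma/(\beta N)$, one sees that after multiplying by $\sqrt\beta N$ the increments $\sqrt\beta N\,\Delta_k$ are bounded in $L^4$ uniformly in $k$ and $N$; this uses the same chi-squared moment estimates that produced the polynomial-in-$\beta$ structure in the earlier lemmas. The conditional Lindeberg condition $\sum_k \Ex[(\sqrt\beta N\Delta_k)^2 \one_{|\sqrt\beta N\Delta_k| > \varepsilon}] \to 0$ then follows from a uniform $L^4$ bound together with $\max_k |\sqrt\beta N \Delta_k| \to 0$, because each single increment is $O_{L^4}(\sqrt\beta/\sqrt N)$ (note $\sqrt\beta \le \sqrt{2c}/\sqrt N \cdot$ const in the bounded regime, and $\sqrt\beta/\sqrt N \to 0$ in the $\beta N\to\infty$ regime as well, since $\beta\le \beta N \cdot (1/N)$ can still blow up — so here one instead argues that $\Delta_k$ itself carries a factor $1/N$ and $\beta$ is controlled by the variance normalization). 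I would then quote the martingale CLT (as packaged in \cite{Trinh-2017}) to conclude $\sqrt\beta N(\bra{L_N,p} - \Ex[\bra{L_N,p}]) \dto \Normal(0,\sigma_p^2)$ resp. $\Normal(0,\sigma_{p,c}^2)$.

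The main obstacle I expect is the bookkeeping in the regime $\beta N \to \infty$ with $\beta$ possibly itself tending to $\infty$: one must check that the fourth-moment bound on $\sqrt\beta N\,\Delta_k$ is genuinely uniform in $N$, which requires tracking how the degree-$\le k$ polynomials $R_{r;k}(\beta)$, $Q_{r,s;k}(\beta)$ interact with the $(\beta N)^{-k}$ factors so that no term is of larger order than the leading variance term. Concretely, the subtle point is that $\Var$ has leading order $(\beta N^2)^{-1}$, and one needs every higher-order contribution (and every fourth cumulant) to be genuinely lower order; this is the analogue of the variance lemma but at the level of the fourth moment of a single martingale increment, and it is exactly the place where the arguments of \cite{Trinh-2017} for Gaussian beta ensembles must be transcribed with the chi-squared (rather than chi) moments and the extra $c_i^2 + d_{i-1}^2$ structure on the diagonal of $J_N$. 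Once that uniform bound is in place, everything else is a direct citation.
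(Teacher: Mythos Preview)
Your approach is essentially the same as the paper's: the paper also invokes Theorem~3.4 of \cite{Trinh-2017} extended to the Wishart-type Jacobi matrix via the filtration $\cF_k=\sigma\{c_i,d_i:i\le k\}$, observes that the variance-convergence hypothesis is supplied by the preceding corollary, and then merely asserts that the remaining hypothesis ``is quite similar to that in the Gaussian beta ensembles case'' without giving details. Your writeup is more explicit about what that second condition should be (a Lindeberg/fourth-moment bound on the martingale increments) and correctly flags the uniform-in-$\beta$ bookkeeping as the delicate point, but the strategy is identical to the paper's.
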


\begin{remark}
\begin{itemize}
\item[(i)]
For fixed $\beta$, it was shown in \cite{DE06} that as $N \to \infty$, 
\[
	N \bra{L_N, p} - N\bra{mp_\gamma, p} -  \left( \frac2\beta - 1\right)\bra{\mu_1, p}  \dto \Normal(0, \sigma_p^2/\beta),
\]
where $\mu_1$ is given by
\[
\mu_1=
		\frac14 \delta_{\lambda_{-}} +	\frac14 \delta_{\lambda_{+}} + \frac{1}{2\pi} \frac{1}{\sqrt{(\lambda_+ - x) (x - \lambda_-)}} \one_{(\lambda_-, \lambda_+) } (dx).
\]

\item[(ii)] Using results in case $\beta = 1,2$, we deduce that the limiting variance in the regime $\beta N \to \infty$ is given by (cf.~\cite[Theorem 7.3.1]{Pastur-book})
\[
	\sigma_f^2 = \frac{1}{2\pi^2} \int_{\gamma_-}^{\gamma^+}\int_{\gamma_-}^{\gamma^+} \left(\frac{f(y) - f(x)}{y - x}\right)^2 \frac{4\gamma - (x - \gamma_m)(y - \gamma_m)}{\sqrt{4 \gamma - (x - \gamma_m)^2} \sqrt{4 \gamma - (y - \gamma_m)^2}}dx dy,
\]
where $\gamma_m = (\gamma_{-} + \gamma_{+})/2 = (1 + \gamma)$.
\end{itemize}
\end{remark}

\subsection{$C^1$ test functions}

To extend CLTs from polynomial test functions to  $C^1$ test functions, one idea is to use a type of Poincar\'e inequality.  Consider (scaled) $\beta$-Laguerre ensembles with the joint density proportional to
\[
	 |\Delta(\lambda)|^\beta \prod_{i = 1}^N \left( \lambda_i^{\alpha} e^{-\eta\lambda_i} \right),
\]
where $\alpha = \frac{\beta}{2}(M - N + 1) - 1$, and $\eta > 0$. Directly from the joint density, we can derive a Poincar\'e inequality by using the following result. However, this approach requires $\alpha > 0$. 
\begin{lemma}
[{\cite[Proposition~2.1]{Bobkov-Ledoux-2000}}]
\label{lem:Poincare}
Let $d\nu = e^{-V} dx$ be a probability measure supported on an open convex set $\Omega \subset \R^N$. Assume that $V$ is twice continuously differentiable and strictly convex on $\Omega$. Then for any locally Lipschitz function $F$ on $\Omega$,
\[
	\Var_\nu[F] = \int F^2 d\nu - \left(\int F d\nu \right)^2 \le \int (\nabla F)^t\Hess(V)^{-1} \nabla F d\nu.
\]
Here $\Hess(V)$ denotes the Hessian of $V$.
\end{lemma}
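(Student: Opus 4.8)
\medskip

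\noindent\textbf{Proof idea.} The inequality is a form of the Brascamp--Lieb inequality, and I would establish it by the H\"ormander $L^2$ method (the ``Witten Laplacian'' argument). Write $d\nu=e^{-V}\,dx$ and introduce the weighted Laplacian $L=\Delta-\nabla V\cdot\nabla$, which is symmetric and nonpositive on $L^2(\nu)$ and satisfies the integration by parts $\int (Lu)\,v\,d\nu=-\int \nabla u\cdot\nabla v\,d\nu$; the boundary contributions on $\partial\Omega$ vanish either because $e^{-V}\to0$ there or because one works with the Neumann realization and uses that $\Omega$ is convex. By density it suffices to treat $F$ smooth and compactly supported in $\Omega$ with $\int F\,d\nu=0$ (if the right-hand side is infinite there is nothing to prove). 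Moreover, replacing $V$ by $V+\frac{\varepsilon}{2}|x|^2$ and letting $\varepsilon\downarrow0$ at the very end, one may assume $\Hess V\ge\varepsilon\,I>0$: this produces a spectral gap, hence a unique solution $u$ of the Poisson equation $-Lu=F$ with $\int u\,d\nu=0$, smooth and integrable enough for the manipulations below.

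\medskip

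\noindent Two integrations by parts give
\[
	\Var_\nu[F]=\int F^2\,d\nu=\int F\,(-Lu)\,d\nu=\int \nabla F\cdot\nabla u\,d\nu .
\]
Differentiating $-Lu=F$ and using the commutation identity $\partial_j L=L\partial_j-\sum_i(\partial_{ij}V)\,\partial_i$, the vector field $w:=\nabla u$ solves $-Lw+(\Hess V)\,w=\nabla F$, with $L$ acting on each component. Pairing this equation with $w$ in $L^2(\nu)$ and integrating by parts once more,
\[
	\int \nabla F\cdot w\,d\nu=\sum_j\int|\nabla w_j|^2\,d\nu+\int w^t(\Hess V)\,w\,d\nu\ \ge\ \int w^t(\Hess V)\,w\,d\nu .
\]
Since the left-hand side equals $\int\nabla F\cdot\nabla u\,d\nu=\Var_\nu[F]$, the Cauchy--Schwarz inequality for the positive definite quadratic form $\Hess V$ yields
\begin{align*}
	\Var_\nu[F]=\int\nabla F\cdot w\,d\nu
	&\le\Big(\int (\nabla F)^t(\Hess V)^{-1}\nabla F\,d\nu\Big)^{1/2}\Big(\int w^t(\Hess V)\,w\,d\nu\Big)^{1/2}\\
	&\le\Big(\int (\nabla F)^t(\Hess V)^{-1}\nabla F\,d\nu\Big)^{1/2}\big(\Var_\nu[F]\big)^{1/2},
\end{align*}
and dividing by $(\Var_\nu[F])^{1/2}$ gives the assertion.

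\medskip

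\noindent The algebra above is short; the genuine work is functional-analytic. One must justify the existence, uniqueness and, crucially, the regularity and integrability of the solution $u$ of $-Lu=F$ so that the two integrations by parts and the pairing with $w=\nabla u$ are legitimate --- this is where uniform convexity of the perturbed $V$ is used, through the spectral gap and elliptic estimates. One must also control the boundary terms in the integration by parts $\int(-Lw_j)w_j\,d\nu=\int|\nabla w_j|^2\,d\nu$ applied component-wise after differentiation: convexity of $\Omega$, i.e.\ nonnegativity of the second fundamental form of $\partial\Omega$, is precisely what is needed there. Finally the two limiting steps --- removing the $\frac{\varepsilon}{2}|x|^2$ regularization and approximating a general locally Lipschitz $F$ by smooth compactly supported ones --- are routine but require uniform bounds; on the unbounded Weyl chambers relevant to $\beta$-Laguerre ensembles with $\alpha>0$, the density $e^{-V}$ vanishes on $\partial\Omega$, so no Neumann boundary terms appear and this part is simpler. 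An alternative route, closer to the cited reference, deduces the inequality from the Pr\'ekopa--Leindler inequality applied to the perturbed density $e^{-V}(1+\varepsilon F)$, expanding to second order in $\varepsilon$; this avoids PDE regularity at the cost of justifying the expansion.
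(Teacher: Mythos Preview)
The paper does not prove this lemma at all: it is quoted verbatim from Bobkov--Ledoux and used as a black box. So there is no ``paper's own proof'' to compare against; your task was essentially to supply what the authors outsourced.

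Your H\"ormander/Witten-Laplacian sketch is a correct and standard route to the Brascamp--Lieb inequality. The algebraic core --- solve $-Lu=F$, differentiate to get $(-L+\Hess V)\nabla u=\nabla F$, drop the nonnegative $\sum_j\int|\nabla w_j|^2\,d\nu$ term, then Cauchy--Schwarz in the $\Hess V$ inner product --- is exactly right. You are also right to flag the genuine technical content: existence and regularity of $u$ under a uniform convexity perturbation, and the sign of the boundary term in the componentwise Bochner identity, which is where convexity of $\Omega$ enters. The alternative you mention at the end --- Pr\'ekopa--Leindler applied to the tilted density, expanded to second order --- is in fact the approach taken in the cited reference by Bobkov and Ledoux, so that route is closer to the source; your PDE argument is more in the spirit of Helffer--Sj\"ostrand or Brascamp--Lieb's original semigroup proof. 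Both approaches are valid and well known; the PDE one generalizes more readily to Riemannian settings, while the Pr\'ekopa--Leindler one is more elementary and avoids the regularity issues you had to hedge on.
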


Let $\Omega = \{(\lambda_1, \dots, \lambda_N): 0 < \lambda_1 <\cdots < \lambda_N\} \subset \R^N$. Let $\nu$ be the distribution of the ordered eigenvalues of (scaled) $\beta$-Laguerre ensembles, that is, the probability measure on $\Omega$ of the form
\[
	d\nu = const \times |\Delta(\lambda)|^\beta \prod_{i = 1}^N \left( \lambda_i^{\alpha} e^{-\eta\lambda_i} \right) d\lambda_1 \cdots d\lambda_N= e^{-V} d\lambda_1 \cdots d\lambda_N,
\]
where 
\[
	V = const + \eta\sum_{i = 1}^N \lambda_i - \alpha \sum_{i = 1}^N \log \lambda_i - \frac{\beta}{2} \sum_{i \neq j} \log|\lambda_j - \lambda_i|.
\]
Then the Hessian matrix of $V$ can be  easily calculated 
\begin{align*}
	\frac{\partial^2 V}{\partial \lambda_i^2} &= \frac{\alpha}{\lambda_i^2} + \beta \sum_{j \neq i} \frac{1}{(\lambda_j - \lambda_i)^2},\\
	\frac{\partial^2 V}{\partial \lambda_i \partial \lambda_j} &= -\beta \frac{1}{(\lambda_j - \lambda_i)^2}.
\end{align*}
Observe that $\Hess (V) \ge D= \diag(\frac{\alpha}{\lambda_i^2})$. Here for two real symmetric matrices $A$ and $B$, the notation $A \ge B$ indicates that $A-B$ is positive semi-definite. It follows that $\Hess (V)^{-1} \le D^{-1}$. And hence, using Lemma~\ref{lem:Poincare} with 
\[
	F = \frac{1}{N} \sum_{i = 1}^N f(\lambda_i),
\]
for a continuously differentiable function $f$,
we get the following inequality
\[
	\Var_\nu[F] \le \int ( \nabla F)^t\Hess(V)^{-1}\nabla F d\nu \le \int  (\nabla F)^t D^{-1}\nabla F d\nu.
\]
The inequality can be rewritten as
\begin{equation}
	\Var[\bra{L_N, f}] \le \frac{1}{\alpha N} \Ex[\bra{L_N, (\lambda f'(\lambda))^2}].
\end{equation}
This is one type of Poincar\'e inequality for $\beta$-Laguerre ensembles in this paper.

The restriction of the above inequality is that $\alpha$ must be positive, which does not hold in the regime $\beta N \to 2c$ when $c$ is small. The second approach based on the random Jacobi matrix model removes such restriction. We will end up with a slightly different inequality. Let us begin with several concepts.

A real random variable $X$ is said to satisfy a Poincar\'e inequality if there is a constant $c>0$ such that for any smooth function $f \colon \R \to \R$,
\[
	\Var[f(X)]  \le c \Ex[f'(X)^2].
\]
Here, by smooth we mean enough regularity so that the considering terms make sense. By definition, it is clear that $X$ satisfies a Poincar\'e inequality with a constant $c$, if and only if $\alpha X$ satisfies a Poincar\'e inequality with a constant $c\alpha^2$, for non-zero constant $\alpha$.

\begin{lemma}
	The chi distribution $\chi_k$ satisfies a Poincar\'e inequality with $c = 1$, that is,
	\[
		\Var[f(X)] \le \Ex[f'(X)^2], \quad X \sim \chi_k.
	\]
\end{lemma}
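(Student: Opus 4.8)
The plan is to reduce the statement to the well-known Gaussian Poincar\'e inequality via a representation of the chi distribution as the Euclidean norm of a standard Gaussian vector. Recall that if $Z = (Z_1, \dots, Z_k)$ is a vector of i.i.d.\ standard normal random variables, then $X = |Z| = \sqrt{Z_1^2 + \cdots + Z_k^2}$ has the $\chi_k$ distribution. The Gaussian Poincar\'e inequality states that for any smooth $G \colon \R^k \to \R$,
\[
	\Var[G(Z)] \le \Ex[|\nabla G(Z)|^2].
\]
Given a smooth $f \colon \R \to \R$, apply this to $G(z) = f(|z|)$. Then $\nabla G(z) = f'(|z|) \cdot z/|z|$, so $|\nabla G(z)|^2 = f'(|z|)^2$, and the inequality becomes exactly $\Var[f(X)] \le \Ex[f'(X)^2]$ with $X \sim \chi_k$.

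First I would state the Gaussian Poincar\'e inequality (with constant $1$) as a known fact, or prove it quickly via the standard one-dimensional tensorization argument or the Ornstein--Uhlenbeck semigroup. Then I would set up the representation $X = |Z|$ and carry out the chain-rule computation above, taking care that $G$ is smooth enough away from the origin and that the set $\{Z = 0\}$ has measure zero, so the formula for $\nabla G$ is valid $\Prob$-almost everywhere. A minor point to address is that $z \mapsto |z|$ is not differentiable at $0$, so strictly speaking $G$ is only locally Lipschitz; one way around this is to note that the Gaussian Poincar\'e inequality holds for locally Lipschitz functions, or to approximate $|z|$ by $\sqrt{|z|^2 + \varepsilon}$ and pass to the limit $\varepsilon \downarrow 0$ using dominated convergence (the regularity assumptions on $f$ guarantee the relevant integrability).

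The main obstacle, such as it is, is purely technical: handling the non-smoothness of the norm at the origin and justifying the limiting argument under the standing "enough regularity" convention for $f$. There is no real analytic difficulty here; the key conceptual content is simply the observation that the radial derivative of $f(|z|)$ has the same squared magnitude as $f'$ evaluated at the radius, which is why the dimension $k$ drops out and the constant is $1$ uniformly in $k$. This uniformity in $k$ is precisely what makes the lemma useful downstream, since the degrees of freedom $\beta M, \beta(N-i), \dots$ of the chi variables in $B_N$ vary with $N$.

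Once this lemma is in place, combined with the tensorization property of the Poincar\'e inequality (a product of independent variables each satisfying a Poincar\'e inequality satisfies one, with constant the maximum of the individual constants) and the scaling remark already noted in the text, one obtains a Poincar\'e inequality for the joint law of the entries $\{c_i, d_i\}$ of $B_N$, and hence, via the fact that $\bra{\mu_N, f}$ and related quantities are smooth functions of these entries, the desired concentration and fluctuation estimates for $C^1$ test functions without the restriction $\alpha > 0$.
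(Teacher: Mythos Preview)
Your argument via the representation $X = |Z|$ with $Z$ a standard Gaussian vector in $\R^k$ is clean and correct, but it only makes sense when $k$ is a positive \emph{integer}. The lemma, however, is needed for arbitrary real $k > 0$: the chi variables appearing in $B_N$ have degrees of freedom $\beta M, \beta(N-1), \dots, \beta$, and since $\beta > 0$ is a general real parameter (indeed $\beta \to 0$ in the regime $\beta N \to 2c$), these are almost never integers. Your own closing paragraph even names these non-integer parameters, so the gap is consequential rather than cosmetic. As stated, the proposal proves the lemma only on a measure-zero set of the parameters actually required.

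The paper circumvents this by working directly with the one-dimensional density $\propto x^{k-1} e^{-x^2/2}$ on $(0,\infty)$ and invoking the Brascamp--Lieb type inequality of Lemma~\ref{lem:Poincare}. For $k \ge 1$ the potential $V(x) = \text{const} - (k-1)\log x + x^2/2$ satisfies $V'' \ge 1$, and the constant $1$ drops out immediately. For $0 < k < 1$ the density is no longer log-concave; the paper then passes to $Y = X^k$, whose density $\propto \exp(-y^{2/k}/2)$ \emph{is} log-concave, and a chain-rule computation gives the Poincar\'e constant $1/(2-k) \le 1$. Both branches cover every real $k > 0$. If you wish to salvage your route, you would need a separate argument (monotonicity or interpolation of the spectral gap in $k$, say) to pass from integer to real degrees of freedom, and that is not free.
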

\begin{proof}
The probability density function of the chi distribution with $k$ degrees of freedom is given by
	\[
		\frac{1}{2^{\frac k2 - 1} \Gamma(\frac k2)} x^{k - 1} e^{-\frac{x^2}{2}}, \quad (x > 0).
	\]
Thus, for $k \ge 1$, the conclusion follows immediately from Lemma~\ref{lem:Poincare}.  
	
	Next, we consider the case $0 < k <1$. Let $Y = X^{k}$. Then the probability density function of $Y$ is given by 
	\[
		const\times \exp({-\frac{y^{\frac 2k}}{2}}),\quad ( y > 0).
	\] 
By using Lemma~\ref{lem:Poincare} with $V = const + \frac{y^{\frac 2k}}2$, we obtain that 
	\[
		\Var[g(Y)] \le \frac{k^2}{2-k}\Ex[Y^{2 - \frac 2k} g'(Y)^2].
	\]

For given $f(x)$, let $g(y) = f(y^\frac1k)$. Then 
we see that 
\[
	\Var[f(X)] = \Var[g(Y)] \le  \frac{k^2}{2-k}\Ex[Y^{2 - \frac 2k} g'(Y)^2] = \frac{1}{2-k}\Ex[f'(X)^2] \le \Ex[f'(X)^2].
\]
This means that $X$ satisfies a Poicar\'e inequality with a constant $c= 1$. The proof is complete.
\end{proof}

We need the following property. 
\begin{lemma}[{\cite[Corollary 5.7]{Ledoux-book}}]
Assume that $X_i, (i = 1, \dots, k),$ satisfy Poicar\'e inequalities with constants $c_i$. Assume further that $\{X_i\}$ are independent. Then for any smooth function $g \colon \R^k \to \R$, 
\[
	\Var[g(X_1, \dots, X_k)] \le (\max_i c_i)  \Ex[|\nabla g(X_1, \dots, X_k)|^2].
\]
\end{lemma}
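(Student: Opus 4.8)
The plan is to prove the tensorization statement by induction on $k$, using the standard conditional decomposition of the variance. The base case $k = 1$ is exactly the hypothesis that $X_1$ satisfies a Poincar\'e inequality with constant $c_1$. For the inductive step, assume the claim holds for $k - 1$ independent random variables, and let $X_1, \dots, X_k$ be independent with $X_i$ satisfying a Poincar\'e inequality with constant $c_i$. Write $\n = (X_2, \dots, X_k)$ and introduce the function obtained by integrating out the first coordinate,
\[
	h(x_2, \dots, x_k) = \Ex\big[ g(X_1, x_2, \dots, x_k) \big].
\]
Conditioning first on $\n$ and then applying the law of total variance gives
\[
	\Var[g(X_1, \dots, X_k)] = \Ex\Big[ \Var\big[ g(X_1, \dots, X_k) \,\big|\, \n \big] \Big] + \Var\big[ h(X_2, \dots, X_k) \big].
\]

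For the first term, fix $\n$ and apply the one-dimensional Poincar\'e inequality for $X_1$ to the map $x_1 \mapsto g(x_1, X_2, \dots, X_k)$; this bounds the inner conditional variance by $c_1 \Ex\big[ (\partial_1 g)^2 \,\big|\, \n \big]$, and taking expectation yields $\Ex[\Var[g \mid \n]] \le c_1 \Ex[(\partial_1 g)^2]$. For the second term, apply the induction hypothesis to the function $h$ of the $k - 1$ independent variables $X_2, \dots, X_k$, obtaining
\[
	\Var[h(X_2, \dots, X_k)] \le \Big( \max_{2 \le i \le k} c_i \Big)\, \Ex\Big[ \sum_{i = 2}^k (\partial_i h)^2 \Big].
\]
Now differentiate under the expectation sign defining $h$, so that $\partial_i h(x_2, \dots, x_k) = \Ex[\partial_i g(X_1, x_2, \dots, x_k)]$, and use the Cauchy--Schwarz (Jensen) inequality $(\partial_i h)^2 \le \Ex[(\partial_i g)^2 \mid \n]$. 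Summing over $i$ and taking expectations gives $\Ex[\sum_{i=2}^k (\partial_i h)^2] \le \Ex[\sum_{i=2}^k (\partial_i g)^2]$. Combining the two bounds,
\[
	\Var[g(X_1, \dots, X_k)] \le c_1 \Ex[(\partial_1 g)^2] + \Big( \max_{2 \le i \le k} c_i \Big) \Ex\Big[ \sum_{i = 2}^k (\partial_i g)^2 \Big] \le \Big( \max_{1 \le i \le k} c_i \Big) \Ex\big[ |\nabla g|^2 \big],
\]
which completes the induction.

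The only genuinely delicate points are the regularity and integrability justifications: one needs $g$ (and its partials) integrable enough that all the variances above are finite, that differentiation under the expectation sign defining $h$ is valid, and that the one-dimensional Poincar\'e inequality applies to the section $x_1 \mapsto g(x_1, \n)$ for almost every $\n$. Under the standing convention in this paper that ``smooth'' means enough regularity for the terms in question to make sense, these are routine; if desired one can first establish the inequality for bounded $g$ with bounded derivatives and then remove the restriction by a truncation and monotone/dominated convergence argument. I expect this technical bookkeeping — rather than the algebra of the induction — to be the main thing to be careful about.
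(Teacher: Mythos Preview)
Your proof is correct and is the standard tensorization argument for Poincar\'e inequalities via the law of total variance. Note, however, that the paper does not supply its own proof of this lemma: it is simply quoted from \cite[Corollary~5.7]{Ledoux-book} and used as a black box, so there is no ``paper's proof'' to compare against. Your argument is essentially the one found in Ledoux's book.
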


Let $Y = (y_{mn})$ be an $M \times N$ real matrix, and $X = Y^t Y = (x_{ij})$. For a continuously differentiable function $f \colon \R \to \R$, let
\[
	g = g((y_{mn})) = \tr (f(X)).
\]
Then the partial derivatives of $g$ can be expressed as follows.

\begin{lemma}[cf.~{Eq.~(7.2.5) in \cite{Pastur-book}}]
It holds that
	\[
		\left( \frac{\partial g}{\partial y_{mn}}\right)_{M\times N} = 2Y f'(X).
	\]
Consequently 
\[
	\sum_{m,n} \left( \frac{\partial g}{\partial y_{mn}}\right)^2 = 4\tr(Y f'(X) f'(X) Y^t ) = 4\tr(X f'^2(X)) = 4 \sum_{i = 1}^N \lambda_i f'(\lambda_i)^2.
\]
Here $\lambda_1, \dots, \lambda_N$ are the eigenvalues of $X$.
\end{lemma}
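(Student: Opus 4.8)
The plan is to establish the gradient identity $\left(\partial g/\partial y_{mn}\right)_{M\times N} = 2Y f'(X)$ first for monomials, extend it to general $C^1$ functions by polynomial approximation, and then read off the norm identity by elementary trace algebra.

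For $f(x) = x^k$ we have $g = \tr(X^k)$ with $X = Y^t Y$. Expanding $(Y+H)^t(Y+H)$ to first order in a perturbation $H$ gives the differential $dX = H^t Y + Y^t H$, and combined with $d\,\tr(X^k) = k\,\tr(X^{k-1}\,dX)$ this yields
\[
	d\,\tr(X^k) = k\,\tr(X^{k-1}H^t Y) + k\,\tr(X^{k-1}Y^t H).
\]
By cyclicity of the trace and symmetry of $X$ (so that $X^{k-1}$ is symmetric and $X^{k-1}Y^t = (Y X^{k-1})^t$), both summands equal $k\,\tr\!\big((Y X^{k-1})^t H\big) = k\sum_{m,n}(Y X^{k-1})_{mn}H_{mn}$, whence $\partial g/\partial y_{mn} = 2k\,(Y X^{k-1})_{mn} = (2Y f'(X))_{mn}$. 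Linearity in $f$ extends the formula to all polynomials.

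To pass to a general $C^1$ function $f$, observe that $X = Y^t Y \ge 0$ has spectrum contained in a fixed compact interval $I$ (depending only on a neighborhood of the given $Y$); by the Weierstrass theorem choose polynomials $q_\ell \to f'$ uniformly on $I$ and set $p_\ell(x) = f(0) + \int_0^x q_\ell(t)\,dt$, so that $p_\ell \to f$ and $p_\ell' \to f'$ uniformly on $I$. Applying the polynomial case to each $p_\ell$ and letting $\ell \to \infty$, the maps $Y \mapsto \tr(p_\ell(X))$ and their gradients $2Y p_\ell'(X)$ converge locally uniformly near $Y$, which transfers differentiability and the gradient formula to $f$. (Alternatively one may invoke directly the standard fact that $\frac{d}{dt}\tr(f(X(t))) = \tr(f'(X)\dot X)$ for symmetric-matrix-valued $C^1$ curves.) Finally, from $\partial g/\partial y_{mn} = (2Y f'(X))_{mn}$ and the symmetry of $f'(X)$,
\[
	\sum_{m,n}\left(\frac{\partial g}{\partial y_{mn}}\right)^2 = 4\sum_{m,n}(Y f'(X))_{mn}^2 = 4\,\tr\!\big(Y f'(X) f'(X)^t Y^t\big) = 4\,\tr\!\big(Y f'(X)^2 Y^t\big);
\]
cyclicity of the trace together with $Y^t Y = X$ turns this into $4\,\tr(X f'(X)^2)$, and the spectral decomposition $X = \sum_i \lambda_i v_i v_i^t$ gives $4\sum_{i=1}^N \lambda_i f'(\lambda_i)^2$.

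The computation is essentially routine; the only point requiring a little care is the $C^1$ extension, where one must keep the polynomial approximations \emph{and} their derivatives uniformly close to $f$ and $f'$ on a neighborhood of $Y$ in order to interchange the limit with differentiation. Everything else is bookkeeping with the trace.
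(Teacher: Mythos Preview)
Your proof is correct. The paper does not supply its own argument for this lemma; it simply records the identity with a pointer to Eq.~(7.2.5) in Pastur--Shcherbina and moves on. Your route---first-order perturbation for monomials, linearity, then Weierstrass approximation to reach $C^1$, followed by the Frobenius-norm/trace bookkeeping---is exactly the standard derivation one finds behind such references, and all the steps check out (in particular the symmetry of $X^{k-1}$ making the two differential terms coincide, and the cyclicity manipulation $\tr(Yf'(X)^2Y^t)=\tr(Y^tYf'(X)^2)=\tr(Xf'(X)^2)$). Nothing to add.
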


%\begin{theorem}
%Assume that $\{\xi_{mn}\}_{1 \le m \le M, 1 \le n \le  N}$ are independent real random variables and satisfy Poincar\'e inequalities with constant $c$. Let $X = Y^t Y$, where $Y = (\xi_{mn})$, and let $\{\lambda_i\}_{i = 1}^N$ be the eigenvalues of $X$.
%Then for any continuously differentiable function $f$, 
%	\[
%		\Var \bigg[\sum_{i = 1}^N f(\lambda_i) \bigg] \le 4c \Ex\bigg[\sum_{i = 1}^N \lambda_i f'(\lambda_i)^2\bigg].
%	\]
%\end{theorem}

Combining three lemmas above, we arrive at another type of Poincar\'e inequality for $\beta$-Laguerre ensembles.
\begin{theorem}  
Assume that $f$ is a continuously differentiable function. Then for $\beta$-Laguerre ensembles~\eqref{bLE}, the following inequality holds
\begin{equation}\label{Poincare-for-Laguerre}
	\Var[\bra{L_N, f}] \le \frac{4}{\beta M N} \Ex[\bra{L_N, x f'(x)^2}].
\end{equation}
\end{theorem}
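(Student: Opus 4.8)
\section*{Proof proposal}

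The plan is to run the three ingredients just assembled — the Poincaré inequality for a single chi variable with constant $1$, the tensorization property for independent variables, and the identity $\sum_{m,n}(\partial g/\partial y_{mn})^2 = 4\sum_i \lambda_i f'(\lambda_i)^2$ — on the bidiagonal model $J_N = B_N(B_N)^t$, absorbing the normalizing constant $(\beta M)^{-1/2}$ of $B_N$ through the chain rule. First I would record that the nonzero entries of $B_N$, namely $c_1,\dots,c_N$ (diagonal) and $d_1,\dots,d_{N-1}$ (subdiagonal), are independent and each of the form $(\beta M)^{-1/2}\chi_k$ for a suitable $k$. Letting $Z_1,\dots,Z_{2N-1}$ be the underlying independent chi variables, the matrix $B_N = B_N(Z_1,\dots,Z_{2N-1})$, and hence
\[
	g := \tr\big(f(J_N)\big) = \sum_{i=1}^N f(\lambda_i) = N\bra{L_N, f},
\]
depends smoothly on $Z_1,\dots,Z_{2N-1}$.

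Since each $Z_j \sim \chi_{k_j}$ obeys a Poincaré inequality with constant $1$, the tensorization lemma for independent variables gives
\[
	N^2\,\Var[\bra{L_N, f}] = \Var[g] \le \Ex\Big[\sum_{j=1}^{2N-1}\Big(\frac{\partial g}{\partial Z_j}\Big)^{\!2}\Big].
\]
By the chain rule, $\partial g/\partial Z_j = (\beta M)^{-1/2}\,\partial g/\partial y$, where $y$ denotes the entry of $B_N$ carried by $Z_j$; hence the sum inside the expectation equals $(\beta M)^{-1}\sum_{y}(\partial g/\partial y)^2$, the sum running over the $2N-1$ nonzero entries of $B_N$.

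To bound $\sum_{y}(\partial g/\partial y)^2$ I would pass to the full matrix. Writing $G(Y) = \tr f(Y^t Y)$ for an arbitrary $N\times N$ matrix $Y$, one has $G(B_N) = \tr f(B_N^t B_N) = \tr f(B_N B_N^t) = g$ because $B_N^t B_N$ and $B_N B_N^t$ share their eigenvalues; the partial derivatives of $G$ with respect to the bidiagonal entries of $Y$, evaluated at $Y = B_N$, are exactly the $\partial g/\partial y$ above. Dropping the remaining nonnegative squared partials and invoking the gradient formula, all evaluated at $Y = B_N$,
\[
	\sum_{y}\Big(\frac{\partial g}{\partial y}\Big)^{\!2} \le \sum_{m,n}\Big(\frac{\partial G}{\partial y_{mn}}\Big)^{\!2} = 4\sum_{i=1}^N \lambda_i f'(\lambda_i)^2 = 4N\bra{L_N,\, x f'(x)^2}.
\]
Combining the three displays yields $N^2\,\Var[\bra{L_N, f}] \le (4N/\beta M)\,\Ex[\bra{L_N, x f'(x)^2}]$, which is \eqref{Poincare-for-Laguerre}.

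The bookkeeping — matching chi indices to entries and tracking the factor $(\beta M)^{-1/2}$ — is routine, as is the final arithmetic. The step that needs care is the reduction from the $2N-1$ ``active'' coordinates to the full $N^2$ entries: one must be sure that the tensorization bound really produces the Dirichlet form of $G$ restricted to the bidiagonal coordinate subspace, so that the off-band squared partials may be discarded without loss. A secondary point is regularity: the gradient formula and the tensorization inequality are cleanest for smooth $f$ (and the chi densities are only $C^1$ up to the boundary), so to obtain \eqref{Poincare-for-Laguerre} for a general continuously differentiable $f$ one first proves it for smooth $f$ and then removes the extra smoothness by the same approximation argument already used for the single-variable chi Poincaré inequality.
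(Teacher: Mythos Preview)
Your proposal is correct and follows the same route the paper takes: the paper's proof consists of the single sentence ``Combining three lemmas above, we arrive at another type of Poincar\'e inequality for $\beta$-Laguerre ensembles,'' and your argument is precisely the natural unpacking of that sentence --- tensorize the constant-$1$ chi Poincar\'e inequalities over the $2N-1$ independent entries, absorb the scaling $(\beta M)^{-1/2}$ via the chain rule, and bound the restricted Dirichlet form by the full one using the gradient identity. The only details you add beyond the paper (passing between $B_N B_N^t$ and $B_N^t B_N$, and dropping the off-band squared partials) are exactly the bookkeeping steps the paper leaves implicit, and your caveats about regularity are appropriate but routine.
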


The above Poincar\'e inequality is a key ingredient to  extend CLTs to $C^1$ test functions. Let us now prove Theorem~\ref{thm:CLT-full}.

%\begin{lemma}
%	Assume that $f$ has continuous derivative of polynomial growth. Then in the regime where $\beta N \to 2c \in (0, \infty]$, 		\[
%			\sqrt{\beta} \left( \sum_{i = 1}^N f(\lambda_i) - \Ex\bigg[ \sum_{i = 1}^N f(\lambda_i) \bigg]  \right) \dto \Normal(0, \sigma_{f,c}^2).
%		\]
%\end{lemma}
\begin{proof}[Proof of Theorem~{\rm\ref{thm:CLT-full}}]
Assume that $f$ has continuous derivative $f'$ of polynomial growth. This implies that $f' \in L^2 ((1+x^2)d\nu_{\gamma, c}(x))$. Recall that for convenience,  $\nu_{\gamma, \infty}$ denotes the Marchenko--Pastur distribution with parameter $\gamma$.
We need the following property of measures determined by moments (called M.~Riesz's Theorem (1923) in \cite{Bakan-2001}): the measure $\mu$ is determined by moments, if and only if polynomials are dense in $L^2((1+x^2)d\mu(x))$. Consequently, there is a sequence of polynomials $\{p_k\}$ converging to $f'$ in $ L^2((1+x^2)d\nu_{\gamma,c}(x))$. It then follows that   
\[
	\int x(p_k - f')^2 d\nu_{\gamma, c}(x) \le \frac12\int (p_k - f')^2 (1+x^2) d\nu_{\gamma, c}(x) \to 0 \quad \text{as}\quad k \to \infty. 
\]

Let $P_k$ be a primitive of $p_k$. Since $P_k$ is a polynomial, for fixed $k$, as $N \to \infty$, 
\begin{equation}\label{CLT-XNk}
	X_{N, k}:= \sqrt{\beta} N (\bra{L_N, P_k} - \Ex[\bra{L_N, P_k}]) \dto \Normal(0, \sigma_{P_k, c}^2), \quad \Var[X_{N, k}] \to  \sigma_{P_k, c}^2.
\end{equation}
Let $Y_N = \sqrt{\beta} N (\bra{L_N, f} - \Ex[\bra{L_N, f}])$. Then by the Poincar\'e inequality~\eqref{Poincare-for-Laguerre},
\begin{equation}\label{Variance-approx}
	\Var[ Y_N - X_{N, k} ] \le \frac{4N}{ M} \Ex[\bra{L_N, x(f' - p_k)^2}],
\end{equation}
which implies that 
\[
	\lim_{k \to \infty} \limsup_{N \to \infty} \Var[ Y_N - X_{N, k} ] \le 4 \gamma  \lim_{k \to \infty} \bra{\nu_{\gamma, c}, x(f' - p_k)^2} = 0.
\]
Here we have used the property that for continuous function $g$ of polynomial growth, 
\[
	\Ex[\bra{L_N, g}] \to \bra{\nu_{\gamma, c}, g}.
\]
Then, the CLT for $Y_N$ follows from the equations \eqref{CLT-XNk} and \eqref{Variance-approx} by using the following general result whose proof can be found in \cite{Nakano-Trinh-2018} or \cite{Trinh-2018-CLT}. The proof is complete.
\end{proof}
\begin{lemma}\label{lem:CLT-triangle}
Let $\{Y_N\}_{N=1}^\infty$ and $\{X_{N,k}\}_{N,k=1}^\infty$ be mean zero real random variables. Assume that
\begin{itemize}
	\item[\rm(i)] for each $k$, as $N \to \infty$,
		$	X_{N,k} \dto \Normal(0, \sigma_k^2),$ and $\Var[X_{N, k}] \to \sigma_k^2$;
	\item[\rm(ii)] 
		$
			\lim_{k \to \infty} \limsup_{N \to \infty} \Var[X_{N,k} - Y_N] =0.
		$
\end{itemize}
Then the limit $\sigma^2 = \lim_{k \to \infty} \sigma_k^2$ exists, and as $N \to \infty$,
\[
 	Y_N \dto \Normal(0, \sigma^2), \quad \Var[Y_N] \to \sigma^2.
\]
\end{lemma}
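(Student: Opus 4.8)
The final statement to prove is \textbf{Lemma~\ref{lem:CLT-triangle}}, a general ``triangular array'' criterion for upgrading a CLT from an approximating sequence $X_{N,k}$ to the target $Y_N$.

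\medskip

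The plan is to work with characteristic functions and a standard $3\varepsilon$-style estimate. First I would establish that $\sigma^2 = \lim_{k\to\infty}\sigma_k^2$ exists. For fixed $k, l$, the triangle inequality in $L^2$ gives
\[
	|\sigma_k - \sigma_l| \le \limsup_{N\to\infty}\big( \|X_{N,k} - Y_N\|_2 + \|Y_N - X_{N,l}\|_2 \big),
\]
using that $\|X_{N,k}\|_2^2 = \Var[X_{N,k}] \to \sigma_k^2$ and similarly for $l$, together with $\|X_{N,k} - Y_N\|_2^2 = \Var[X_{N,k} - Y_N]$ (all variables are mean zero). By hypothesis~(ii), the right-hand side tends to $0$ as $k, l \to \infty$, so $\{\sigma_k\}$ is Cauchy and converges to some $\sigma \ge 0$; write $\sigma^2$ for its square.

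\medskip

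Next I would prove $Y_N \dto \Normal(0,\sigma^2)$ via pointwise convergence of characteristic functions. Fix $t \in \R$ and write
\[
	\big| \Ex[e^{itY_N}] - e^{-t^2\sigma^2/2} \big|
	\le \big| \Ex[e^{itY_N}] - \Ex[e^{itX_{N,k}}] \big|
	+ \big| \Ex[e^{itX_{N,k}}] - e^{-t^2\sigma_k^2/2} \big|
	+ \big| e^{-t^2\sigma_k^2/2} - e^{-t^2\sigma^2/2} \big|.
\]
For the first term, use $|e^{ia} - e^{ib}| \le |a-b|$ and the Cauchy--Schwarz inequality:
\[
	\big| \Ex[e^{itY_N}] - \Ex[e^{itX_{N,k}}] \big| \le |t|\, \Ex|Y_N - X_{N,k}| \le |t|\, \Var[Y_N - X_{N,k}]^{1/2}.
\]
Given $\varepsilon > 0$, by hypothesis~(ii) choose $k$ so large that $\limsup_{N\to\infty}\Var[Y_N - X_{N,k}]^{1/2} < \varepsilon$ and simultaneously $|\sigma_k^2 - \sigma^2|$ is small enough to make the third term $< \varepsilon$. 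With $k$ now fixed, hypothesis~(i) makes the second term tend to $0$ as $N \to \infty$. Hence $\limsup_{N\to\infty}|\Ex[e^{itY_N}] - e^{-t^2\sigma^2/2}| \le 2\varepsilon + |t|\varepsilon$ for every $\varepsilon > 0$, so the limit is $0$. By L\'evy's continuity theorem, $Y_N \dto \Normal(0,\sigma^2)$.

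\medskip

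Finally, to get $\Var[Y_N] \to \sigma^2$, again use the $L^2$ triangle inequality: $\big| \Var[Y_N]^{1/2} - \sigma_k \big| \le \Var[Y_N - X_{N,k}]^{1/2} + \big|\Var[X_{N,k}]^{1/2} - \sigma_k\big|$. Taking $\limsup_{N\to\infty}$ kills the second term by~(i), and then letting $k\to\infty$ sends the first term to $0$ by~(ii) while $\sigma_k \to \sigma$; together these force $\Var[Y_N] \to \sigma^2$. I expect no serious obstacle here—the only mild subtlety is the order of quantifiers (choose $k$ first to control the tail uniformly in large $N$, then send $N\to\infty$), which is exactly what hypothesis~(ii), with its $\lim_k \limsup_N$ structure, is designed to handle; one should also note that convergence in distribution plus $L^2$-boundedness (which follows from (i) and (ii)) does not by itself give convergence of variances, so the explicit $L^2$ argument above is genuinely needed rather than a soft uniform-integrability appeal.
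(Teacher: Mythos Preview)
The paper does not actually prove this lemma; it states the result and refers the reader to \cite{Nakano-Trinh-2018} and \cite{Trinh-2018-CLT} for a proof. Your argument is correct and self-contained: the Cauchy property of $\{\sigma_k\}$ via the reverse triangle inequality in $L^2$, the $3\varepsilon$ splitting of characteristic functions using $|e^{ia}-e^{ib}|\le|a-b|$, and the $L^2$ triangle inequality for the variance convergence are all valid, and you handle the $\lim_k\limsup_N$ order of quantifiers properly. This is essentially the standard proof one would expect in the cited references, so there is nothing to contrast.
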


\bigskip
\noindent{\bf Acknowledgment.}
This work is supported by the VNU University of Science under project number TN.18.03 (H.D.T) and by JSPS KAKENHI Grant Number JP19K14547 (K.D.T.).

%
%\bibliographystyle{spmpsci}
%\bibliography{bib}

\end{document}